\newtheorem{theorem}{Theorem}[section]
\newtheorem{lemma}[theorem]{Lemma}
\newtheorem{proposition}[theorem]{Proposition}
\newtheorem{definition}[theorem]{Definition}
\theoremstyle{remark}
\newtheorem{remark}[theorem]{Remark}
\newcommand{\eps}{\epsilon}
\newcommand{\id}{\mathrm{id}}
\newcommand{\mbv}{\mathbf{v}}
\newcommand{\mcA}{\mathcal{A}}
\newcommand{\mcU}{\mathcal{U}}
\newcommand{\mfa}{\mathfrak{a}}
\newcommand{\mfb}{\mathfrak{b}}
\newcommand{\mfB}{\mathfrak{B}}
\newcommand{\mfc}{\mathfrak{c}}
\newcommand{\mfg}{\mathfrak{g}}
\newcommand{\mfgl}{\mathfrak{g}\mathfrak{l}}
\newcommand{\mfo}{\mathfrak{o}}
\newcommand{\mfp}{\mathfrak{p}}
\newcommand{\mfq}{\mathfrak{q}}
\newcommand{\mfsp}{\mathfrak{sp}}
\newcommand{\mfU}{\mathfrak{U}}
\newcommand{\msB}{\mathsf{B}}
\newcommand{\msc}{\mathsf{c}}
\newcommand{\msE}{\mathsf{E}}
\newcommand{\mss}{\mathsf{s}}
\newcommand{\mst}{\mathsf{t}}
\newcommand{\End}{\mathrm{End}}
\newcommand{\wt}{\widetilde}
\newcommand{\lra}{\longrightarrow}
\newcommand{\onto}{\twoheadrightarrow}
\newcommand{\C}{\mathbb{C}}
\newcommand{\Z}{\mathbb{Z}}
\newcommand{\ot}{\otimes}
\newcommand{\ol}{\overline}
\DeclareMathOperator{\sgn}{sgn}
\begin{document}

\setlength{\pdfpagewidth}{8.5in}
\setlength{\pdfpageheight}{11in}

\setlength{\parskip}{1em}

\begin{center}

\Large{\textbf{  Quantized enveloping superalgebra of type $P$ }}

Saber Ahmed, Dimitar Grantcharov, Nicolas Guay
\end{center}

\begin{abstract} We introduce a new quantized enveloping superalgebra $\mfU_q\mfp_n$ attached to the Lie superalgebra $\mfp_n$ of type $P$. The superalgebra $\mfU_q\mfp_n$ is a quantization of a Lie bisuperalgebra structure on $\mfp_n$ and we study some of its basic properties. We  also introduce the periplectic $q$-Brauer algebra and prove that it is the centralizer of the $\mfU_q\mfp_n$-module structure on $\C(n|n)^{\otimes l}$. We end by proposing a definition for a new periplectic $q$-Schur superalgebra.

\end{abstract}

\section*{Introduction}

The simple finite-dimensional Lie superalgebras over $\mathbb{C}$ were classified by V. Kac in \cite{Kac}. The list in \textit{loc. cit.} contains three classes of Lie superalgebras: basic, strange and Cartan-type. There are two types of strange Lie superalgebras - $P$ and $Q$ - both of which are interesting due to the algebraic, geometric, and combinatorial properties of their representations. The study of the representations of type $P$ Lie superalgebras, which are also called periplectic in the literature, has attracted considerable attention in the last  five years. Interesting results on the category $\mathcal O$, the associated periplectic Brauer algebras, and related theories have been established in \cite{BDEA^+1}, \cite{BDEA^+2}, \cite{CP}, \cite{Co}, \cite{CE1}, \cite{CE2}, \cite{DHIN}, \cite{EAS1}, \cite{EAS2}, \cite{HIR}, \cite{IN}, \cite{IRS}, \cite{KT}, \cite{Ser}, among others.

The purpose of this paper is to introduce a quantum superalgebra of type $P$ via the FRT formalism \cite{FRT}. A similar approach was used by G. Olshanski in \cite{Ol} to define quantum superalgebras of type $Q$. We prove that our quantized enveloping superalgebra $\mfU_q\mfp_n$ quantizes a Lie bisuperalgebra structure on $\mfp_n$, a periplectic Lie superalgebra. 

Using a Manin triple, we find a solution $s$ of the classical Yang-Baxter equation. This element is similar but different from the fake Casimir element used in \cite{BDEA^+1}, \cite{BDEA^+2}. The quantum version of $s$, denoted $S$, is a solution of the quantum Yang-Baxter equation which serves as an essential ingredient in the definition of $\mfU_q\mfp_n$. It follows that the tensor superspace $\mathbb{C}(n|n)^{\otimes \ell}$ is a representation of $\mfU_q\mfp_n$ and the centralizer of the action of $\mfU_q\mfp_n$ is a quantum version of the periplectic Brauer algebra. The classical setting corresponding to $q=1$ was studied in \cite{Mo}. A similar result for type $Q$ Lie superalgebras was established in \cite{Ol}, where the centralizer of the action of the quantized enveloping superalgebra was proven to be the Hecke-Clifford superalgebra of the symmetric group $S_{\ell}$.  Having at our disposal the periplectic $q$-Brauer algebra, we can introduce the periplectic $q$-Schur superalgebra in a natural way. We conjecture that these are mutual centralizers (that is, they satisfy a double-centralizer property).

One immediate problem is to define $\mfU_q\mfp_n$ in terms of Drinfeld-Jimbo generators and relations and study its category $\mathcal O$. For type $Q$ Lie superalgebras, this problem was addressed in \cite{GJKK}. Furthermore,  in \cite{GJKKK}, a theory of crystal bases for the tensor representations of $\mfU_q\mfg$ was established. Unfortunately, it is unlikely that natural crystal bases exist in the type $P$ case due to the nonsemisimplicity of the category of tensor modules, contrary to what happens in type $Q$. Another natural direction is to construct, using also the FRT formalism, quantum affine superalgebras of type $P$. (See \cite{ChGu} for the type $Q$ case.) Yangians of type $P$ and $Q$ appeared already many years ago in the work of M. Nazarov \cite{Na1,Na2}. We hope to return to these questions in a future publication.

After setting up the notation and basic definitions in the first section, we introduce the ``butterfly'' Lie bisuperalgebra in Section \ref{bisup} and define the quantized enveloping superalgebra of type $P$ in the following section. The main result of Section \ref{sec:Uq} is Theorem \ref{QYBE}, which states that $S$, the $q$-deformation of $s$, is a solution of the quantum Yang-Baxter equation. In Section \ref{limquant}, we prove that $\mfU_q\mfp_n$ is a quantization of the Lie bisuperalgebra structure from Section \ref{bisup}: see Theorem \ref{thm-lim-cobr}. The new periplectic $q$-Brauer algebra $\mfB_{q,\ell}$ and the new periplectic $q$-Schur algebra are introduced in the last section, where we prove that $\mfB_{q,\ell}$ can be defined equivalently either using generators and relations or as the centralizer of the action of $\mfU_q(\mfp_n)$ on the tensor space: see Theorem \ref{thm:Bqcent}.

The proofs of our results require extensive computations: further details for all the computations can be found in \cite{AGG}.

\noindent \textbf{Acknowledgements:} The second named author is partly supported by the Simons Collaboration Grant 358245. He also would like to thank the Max Planck Institute in Bonn (where part of this work was completed) for the excellent working conditions. The third named author gratefully acknowledges the financial support of the Natural Sciences and Engineering Research Council of Canada provided via the Discovery Grant Program. We thank Patrick Conner, Robert Muth, and Vidas Regelskis for help with certain computations in the preliminary stages of the present paper. We also thank Nicholas Davidson, Jonathan Kujawa for the useful discussions. Finally,  we are grateful to the  referees for the valuable suggestions. 

\section{The Lie superalgebra of type $P$}

Let $\C(n|n)$ be the vector superspace $\C^n \oplus \C^n$ spanned by the odd standard basis vectors $e_{-n}, \ldots, e_{-1}$ and the even standard basis vectors $e_1,\ldots,e_n$. Let $M_{n|n}(\C)$ be the  vector superspace consisting of matrices $A=(a_{ij})$ with $a_{ij} \in \C$ and with rows and columns labelled using the integers $-n,\ldots, -1,1,\ldots,n$, so $i,j \in \{ \pm 1, \pm 2, \ldots, \pm n  \}$. Set $p(i) =1 \in \Z_2$ if $-n \le i\le -1$ and $p(i)=0 \in \Z_2$ if $1\le i\le n$. The parity of the elementary matrix $E_{ij}$ is $p(i)+p(j) \; \mathrm{mod}\, 2$. We denote by $\mfgl_{n|n}$ the Lie superalgebra over $\C$ whose underlying vector space is $M_{n|n}(\C)$ and which is equipped with the Lie superbracket $$[E_{ij}, E_{kl}] = \delta_{jk}E_{il} - (-1)^{(p(i)+p(j))(p(k)+p(l))} \delta_{il} E_{kj}.$$ 

Recall that the supertranspose $(\cdot)^{\rm st}$ on $\mfgl_{n|n}$ is given by the formula $(E_{ij})^{\rm st} = (-1)^{p(i)(p(j)+1)} E_{ji}$. The involution $\iota$ on $\mfgl_{n|n}$ which will be relevant for this paper is given by $\iota(X) = -\pi(X^{\rm st})$ where $\pi: \mfgl_{n|n} \lra \mfgl_{n|n}$ is the linear map given by $\pi(E_{ij}) = E_{-i,-j}$.

\begin{definition}
The Lie superalgebra $\mfp_n$ of type $P$, which is also called the periplectic Lie superalgebra, is the subspace of fixed points of $\mfgl_{n|n}$ under the involution $\iota$, that is, $\mfp_n = \{ X \in \mfgl_{n|n} \, | \, \iota(X)=X \}$.
\end{definition} 

If $X \in \mfp_n$ with $\begin{pmatrix} A &  B \\ C & D  \end{pmatrix}$ and $A,B,C,D \in M_n(\C)$, then $D=-A^t$, $B=B^t$ and $C=-C^t$ where $t$ denotes the transpose with respect to the diagonal $i=-j$.  For convenience, we set
$$
\msE_{ij} = E_{ij} + \iota(E_{ij}) = E_{ij} - (-1)^{p(i)(p(j)+1)} E_{-j,-i}.
$$ The superbracket on $\mfp_n$ is given by \begin{eqnarray} [\msE_{ji}, \msE_{lk}] 
& = & \delta_{il} \msE_{jk} - (-1)^{(p(i)+p(j))(p(k)+p(l))} \delta_{jk} \msE_{li} \notag \\
& & - \delta_{i,-k}(-1)^{p(l)(p(k)+1)} \msE_{j,-l} - \delta_{-j,l}(-1)^{p(j)(p(i)+1)} \msE_{-i,k} \label{supbr}
\end{eqnarray}

A basis of $\mfp_n$ is provided by all the matrices $\msE_{ij}$ with indices $i$ and $j$ respecting one of the following inequalities: $$1 \le |j| < |i| \le n \text{ or } 1\le i=j \le n \text{ or } -n \le i=-j \le -1.$$
Note that  $\msE_{ij} = - (-1)^{p(i)(p(j)+1)}\msE_{-j,-i}$ for all $i,j\in\{\pm 1, \ldots, \pm n\}$,  hence $\msE_{i,-i} = 0$ when $1 \le i \le n$.

\section{Lie bisuperalgebra structure}\label{bisup}

To construct a Lie bisuperalgebra structure on $\mfp_n$, we define a Manin supertriple. We follow the idea in \cite{Ol} for the case of the Lie superalgebra of type $Q$.  Recall that a \emph{Manin supertriple} $(\mfa, \mfa_1, \mfa_2)$  consists of a Lie superalgebra $\mfa$ equipped with an ad-invariant supersymmetric non-degenerate bilinear form $\msB$ along with two Lie subsuperalgebras $\mfa_1, \mfa_2$ of $\mfa$ which are $\msB$-isotropic transversal subspaces of $\mfa$. Note that such a bilinear form $\msB$ defines a non-degenerate pairing between $\mfa_1$ and $\mfa_2$ and a supercobracket $\delta: \mfa_1 \to \mfa_1^{\otimes 2}$ via
$$
\msB^{\otimes 2} (\delta (X), Y_1 \otimes Y_2) = \msB (X, [Y_1,Y_2]),
$$
where $X \in \mfa_1$, $Y_1,Y_2 \in \mfa_2$.

\begin{definition}
The ``butterfly'' Lie superalgebra $\mfb_n$ is the subspace of $\mfgl_{n|n}$ spanned by $E_{ij}$ with $1\le |i| < |j|\le n$ and by $E_{ii} + E_{-i,-i}, E_{i,-i}$ for $1\le i\le n$.
\end{definition}

Note that after adding all diagonal matrices to $\mfb_n$ we obtain a Borel subalgebra of $\mfgl_{n|n}$ whose simple roots are all odd.
Note also that $\mfgl_{n|n} = \mfp_n \oplus \mfb_n$. It is well-known that the bilinear form $\msB(\cdot,\cdot)$ on $\mfgl_{n|n}$ given by the super-trace, $\msB(A,B) = \mathrm{Str}(AB)$, is ad-invariant, supersymmetric and non-degenerate.

One easily checks that $\msB(X_1,X_2) = 0$ if $X_1,X_2 \in \mfp_n$ or if $X_1,X_2 \in \mfb_n$. Hence we have the following result.
\begin{proposition} $(\mfgl_{n|n}, \mfp_n,\mfb_n)$ is a Manin supertriple.
\end{proposition}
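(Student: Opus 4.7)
The plan is to verify the three defining properties of a Manin supertriple in turn: that $\mfp_n$ and $\mfb_n$ are Lie subsuperalgebras, that they are $\msB$-isotropic, and that $\mfgl_{n|n} = \mfp_n \oplus \mfb_n$. The ad-invariance, supersymmetry, and non-degeneracy of $\msB$ are already stated to be classical facts about the super-trace pairing, so no work is needed there.

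First I would dispose of the easy structural claims. Since $\mfp_n$ is defined as the fixed-point subspace of an involution $\iota$ on the Lie superalgebra $\mfgl_{n|n}$, it is automatically a Lie subsuperalgebra provided $\iota$ is a homomorphism; this in turn reduces to the standard facts that the supertranspose is a graded anti-automorphism and that $\pi$, being conjugation by the permutation $e_i \mapsto e_{-i}$, is an automorphism. For $\mfb_n$, I would either verify closure under the supercommutator directly on the given basis, or observe that $\mfb_n$ is the sum of the Cartan-like block $E_{ii}+E_{-i,-i}$, the strictly upper part $E_{ij}$ with $|i|<|j|$, and the block $E_{i,-i}$, and check closure by a short case analysis. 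The transversality $\mfgl_{n|n} = \mfp_n \oplus \mfb_n$ then follows from a dimension count ($2n^2+2n^2 = (2n)^2$) together with $\mfp_n \cap \mfb_n = 0$, which is visible because each matrix entry $E_{ij}$ appears in at most one of the two subspaces once indexed by $|i|$ versus $|j|$ (and the diagonal splits into $E_{ii}-E_{-i,-i}$ vs.\ $E_{ii}+E_{-i,-i}$).

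The heart of the argument is the isotropy, which the authors call a routine check. For $\mfp_n$, I would avoid a direct case analysis by establishing the identity
\[
\msB(\iota(X),\iota(Y)) = -\msB(X,Y) \qquad \text{for all } X,Y \in \mfgl_{n|n}.
\]
This reduces, on basis elements $E_{ij}$, to a sign calculation using $\Str(E_{ab}E_{cd}) = (-1)^{p(a)}\delta_{ad}\delta_{bc}$ together with the fact that $\pi$ flips the parity of the diagonal matrix unit $E_{ii} \leftrightarrow E_{-i,-i}$ (whose supertraces differ by a sign) and that the supertranspose interacts with $\Str$ trivially on quadratic expressions. Once this identity is proved, for $X,Y\in\mfp_n$ one has $\msB(X,Y) = \msB(\iota X,\iota Y) = -\msB(X,Y)$, forcing $\msB(X,Y)=0$.

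For the isotropy of $\mfb_n$, the quickest route is a direct verification on the basis using $\msB(E_{ij},E_{kl}) = (-1)^{p(i)}\delta_{il}\delta_{jk}$. The possible matchings $i=l$, $j=k$ conflict with the strict inequalities $|i|<|j|$ and $|k|<|l|$ that define the off-diagonal part of $\mfb_n$, making off-diagonal pairings vanish. The remaining cases involving $E_{ii}+E_{-i,-i}$ and $E_{i,-i}$ reduce to observing that on the diagonal pair the contributions from the two blocks come with opposite signs (since $p(i)\ne p(-i)$) and cancel, while $E_{i,-i}$ paired with anything in $\mfb_n$ for $i>0$ forces an index combination that is blocked by the sign convention. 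The main (mild) obstacle is bookkeeping the parity signs $(-1)^{p(i)(p(j)+1)}$ coherently throughout the $\iota$-equivariance computation; once the identity $\msB\circ(\iota\otimes\iota) = -\msB$ is in hand, the rest of the proof is essentially a dimension and index check.
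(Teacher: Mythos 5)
Your proposal is correct, and it fills in details the paper simply asserts: the paper only notes $\mfgl_{n|n}=\mfp_n\oplus\mfb_n$, cites the standard properties of the supertrace form, and says the isotropy of the two subalgebras is ``easily checked.'' Your treatment of the isotropy of $\mfp_n$ is a genuinely nicer route than a basis-by-basis check: the identity $\msB(\iota(X),\iota(Y))=-\msB(X,Y)$ does hold, since the two minus signs coming from $\iota=-\pi\circ(\cdot)^{\rm st}$ cancel, $\Str(X^{\rm st}Y^{\rm st})=\Str(XY)$, and $\Str\circ\pi=-\Str$ supplies the overall sign; applied to $\iota$-fixed elements it gives $\msB(X,Y)=-\msB(X,Y)=0$ in one line. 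The isotropy of $\mfb_n$ and the closure checks are exactly the routine verifications the paper has in mind.

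One spot needs tightening: your justification of $\mfp_n\cap\mfb_n=0$, namely that ``each matrix entry $E_{ij}$ appears in at most one of the two subspaces,'' is not literally true. The basis elements $\msE_{ij}=E_{ij}-(-1)^{p(i)(p(j)+1)}E_{-j,-i}$ of $\mfp_n$ with $|j|<|i|$ contain the entry $E_{-j,-i}$, which lies in the region $|\mathrm{row}|<|\mathrm{col}|$ spanned by the off-diagonal part of $\mfb_n$; so entries in that region occur in elements of both subspaces. The conclusion is still immediate, but the correct argument is: an element of $\mfb_n$ has no component in the strictly ``lower'' region $|\mathrm{row}|>|\mathrm{col}|$, whereas $\iota$ sends the strictly upper region bijectively to the lower region and acts by $-1$ on the spans of $E_{ii}+E_{-i,-i}$ and of $E_{i,-i}$ with $i>0$; hence if $X\in\mfb_n$ satisfies $\iota(X)=X$, its upper part must vanish (its $\iota$-image would be a lower-region component $X$ does not have) and its remaining part lies in the $(-1)$-eigenspace of $\iota$ while being fixed, so $X=0$. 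With this one-line repair, your dimension count $2n^2+2n^2=(2n)^2$ gives the transversality, and the rest of the proof stands.
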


\begin{remark} A similar Manin supertriple is given in \cite{LeSh}, \S2.2. \end{remark}

The quantum superalgebra that we will define in the next section will be a quantization of the Lie bisuperalgebra structure given by the Manin supertriple $(\mfgl_{n|n}, \mfp_n,\mfb_n)$.

We extend the form $\msB(\cdot,\cdot)$ to a non-degenerate pairing $\msB^{\otimes 2}$ on $\mfgl_{n|n} \otimes_{\C} \mfgl_{n|n}$ by setting $$\msB^{\otimes 2}(X_1 \ot X_2, Y_1 \ot Y_2) = (-1)^{|X_2||Y_1|} \msB(X_1,Y_1) \msB(X_2,Y_2)$$ for all homogeneous elements $X_1,X_2, Y_1,Y_2 \in \mfp_n$. The sign $(-1)^{|X_2||Y_1|}$ is necessary to make this form ad-invariant. 

Let 
\begin{equation} \label{equ-s}
   \mss = \sum\limits_{1\leq \vert j \vert < \vert i \vert \leq n} (-1)^{p(j)}\msE_{ij}\otimes E_{ji} + \frac{1}{2}\sum\limits_{1\leq i \leq n}\msE_{ii}\otimes(E_{ii}+E_{-i,-i})+\frac{1}{2}\sum\limits_{1\leq i \leq n}\msE_{-i,i}\otimes E_{i,-i}
\end{equation}
\begin{remark}
We note that the fake Casimir used in  \cite{BDEA^+1} is also defined using the sum of tensor product of basis vectors in $\mfp_n$ and their duals in $\mfp_n^{\perp}$, but the fake Casimir differs from the element $\mss$ defined above. One crucial difference is that the  space $\mfp_n^{\perp}$ used in \cite{BDEA^+1} is not a subalgebra of $\mfgl_{n|n}$, while $\mfb_n$ is.
\end{remark}

\begin{proposition} \label{prop-cybe} $\mss$ is a solution of the classical Yang-Baxter equation: $[\mss_{12}, \mss_{13}] + [\mss_{12}, \mss_{23}] + [\mss_{13}, \mss_{23}]=0$. 
\end{proposition}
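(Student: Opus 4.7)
The plan is to derive the CYBE from the general principle that the canonical element associated to a Manin (super)triple automatically solves the classical Yang-Baxter equation, using the supertriple $(\mfgl_{n|n}, \mfp_n, \mfb_n)$ together with the pairing $\msB$ already set up in this section. So the proof reduces to identifying $\mss$ with the canonical tensor of this Manin supertriple and then quoting (and, in the super case, carefully re-deriving) the standard argument.

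First I would verify that $\mss$ is the canonical element $\sum_i x_i \otimes x^i$, where $\{x_i\}$ is a homogeneous basis of $\mfp_n$ and $\{x^i\}$ is its dual basis in $\mfb_n$ under $\msB$. Using $\msB(A,B)=\Str(AB)$ and the defining formula $\msE_{ij} = E_{ij} - (-1)^{p(i)(p(j)+1)}E_{-j,-i}$, one computes $\msB(\msE_{ij}, E_{ji})$ for $1\le |j|<|i|\le n$, $\msB(\msE_{ii},E_{ii}+E_{-i,-i})$ and $\msB(\msE_{-i,i},E_{i,-i})$. The coefficients $(-1)^{p(j)}$, $\tfrac12$, $\tfrac12$ appearing in \eqref{equ-s} are precisely those that make $\mss$ the canonical element of the non-degenerate pairing between $\mfp_n$ and $\mfb_n$; the factors of $\tfrac12$ reflect that $\msB(\msE_{ii}, E_{ii}+E_{-i,-i})$ and $\msB(\msE_{-i,i}, E_{i,-i})$ come out with an extra factor of $2$ compared to the off-diagonal entries.

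Once this identification is in place, I would run the standard Drinfeld-style argument in the super setting. Let $t = \mss + \mss^{21} \in \mfgl_{n|n}^{\otimes 2}$ be the Casimir of $\msB$; its ad-invariance follows from the ad-invariance of the form. Expanding $[\mss_{12}+\mss_{13}+\mss_{23},\,\mss_{ij}]$ for each pair $ij$ and using both the ad-invariance of $t$ and the fact that $\mfp_n$ and $\mfb_n$ are subsuperalgebras of $\mfgl_{n|n}$, one shows that $[\mss_{12},\mss_{13}]+[\mss_{12},\mss_{23}]+[\mss_{13},\mss_{23}]$ is simultaneously contained in $\mfp_n^{\otimes 3}$ (from the first two terms, since $\mss\in\mfp_n\otimes\mfb_n$ and $\mfp_n$ is closed under the bracket when suitably paired via $t$) and in its $\msB^{\otimes 3}$-annihilator, hence vanishes.

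The main obstacle is bookkeeping of Koszul signs: every transposition of tensor factors, every supercommutator and every insertion of $\mss$ into a given slot of a threefold tensor product carries a sign depending on the $\Z_2$-parities of the factors, and the compatibility of the formula \eqref{supbr} for the bracket in $\mfp_n$ with the corresponding bracket in $\mfb_n$ through the pairing must be tracked carefully. A purely direct verification — expanding the nine terms $[\mss_{ab},\mss_{cd}]$ summand by summand via \eqref{supbr} and the analogous formulas for $\mfb_n$, and checking massive cancellation — is certainly possible and is the type of calculation the authors relegate to \cite{AGG}, but the Manin-supertriple route keeps the sign accounting localized to the verification of a single dual-basis identity.
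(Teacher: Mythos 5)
Your proposal is correct and follows essentially the same route as the paper: the paper also reduces the statement to the general fact (its Lemma \ref{CYBE}) that the canonical element of a Manin supertriple with dual bases of the two isotropic halves solves the CYBE, applied to $(\mfgl_{n|n},\mfp_n,\mfb_n)$, with the coefficients in \eqref{equ-s} being exactly the dual-basis normalizations. The only point to watch in your dual-basis check is the order of arguments for the odd pairs, since $\msB(E_{i,-i},\msE_{-i,i})=2$ while $\msB(\msE_{-i,i},E_{i,-i})=-2$; the convention $\msB(X_i',X_j)=\delta_{ij}$ of the lemma is the one that matches the $\tfrac12$ coefficients in $\mss$.
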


The proof of the above proposition follows from the lemma below, which should be well-known among experts. 

\begin{lemma}\label{CYBE}
Let $\mfp$ be a finite dimensional Lie superalgebra and suppose that $(\mfp, \mfp_1, \mfp_2)$ is a Manin triple with respect to a certain supersymmetric, invariant, bilinear form $\msB(\cdot, \cdot)$. Let $\{ X_{i}\}_{i \in I}, \{ X'_i \}_{i\in I }$ be bases of $\mfp_1$ and $\mfp_2$, respectively, dual in the sense that $\msB(X_i', X_j) = \delta_{ij} $.  Set $s= \sum_{i\in I} X_i \ot X_i'$. Then $s$ is a solution of the classical Yang-Baxter equation.
\end{lemma}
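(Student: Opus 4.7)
The plan is to prove CYBE by direct expansion of the three commutators, with cancellation driven by ad-invariance of $\msB$ together with isotropy of $\mfp_1,\mfp_2$.

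First, I fix homogeneous bases $\{X_i\}_{i\in I}$ of $\mfp_1$ and $\{X'_i\}_{i\in I}$ of $\mfp_2$ with $\msB(X'_i, X_j) = \delta_{ij}$. Since $\msB$ is supersymmetric and non-degenerately pairs $\mfp_1$ with $\mfp_2$, one has $|X_i| = |X'_i|$ for all $i$. Using that $\mfp_1,\mfp_2$ are Lie subsuperalgebras, I introduce structure constants
\[ [X_i, X_j] = \sum_k a_{ij}^k X_k, \qquad [X'_i, X'_j] = \sum_k b_{ij}^k X'_k, \qquad [X_i, X'_j] = \sum_k c_{ij}^k X_k + \sum_k d_{ij}^k X'_k, \]
the last one following the decomposition $\mfp = \mfp_1 \oplus \mfp_2$. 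A short calculation with the Koszul sign rule in $U(\mfp)^{\otimes 3}$ then gives
\begin{align*}
[s_{12}, s_{13}] &= \sum_{i,j} (-1)^{|X_i||X_j|} [X_i, X_j] \otimes X'_i \otimes X'_j, \\
[s_{12}, s_{23}] &= \sum_{i,j} X_i \otimes [X'_i, X_j] \otimes X'_j, \\
[s_{13}, s_{23}] &= \sum_{i,j} (-1)^{|X_i||X_j|} X_i \otimes X_j \otimes [X'_i, X'_j].
\end{align*}
After substituting the structure constants, the CYBE sum decomposes into contributions of only two types, living in $\mfp_1\otimes\mfp_1\otimes\mfp_2$ and $\mfp_1\otimes\mfp_2\otimes\mfp_2$ respectively, which I check vanish separately.

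The cancellation of each type is driven by ad-invariance of $\msB$. Pairing $\msB([X_i,X_j], X'_l) = \msB(X_i, [X_j, X'_l])$ and killing the $\mfp_1$-term of $[X_j, X'_l]$ by isotropy of $\mfp_1$ yields the identity $a_{ij}^l = (-1)^{|X_i|+|X_l|}\, d_{jl}^i$; similarly, pairing $\msB([X'_i, X'_j], X_l) = \msB(X'_i, [X'_j, X_l])$ produces $b_{ij}^l = -(-1)^{|X_j||X_l|}\, c_{lj}^i$. Substituting these into the coefficient of a fixed basis monomial $X_l\otimes X'_m\otimes X'_j$ (respectively $X_l\otimes X_m\otimes X'_j$) and invoking the parity constraint imposed by the Lie bracket (for instance $|X_l| = |X_i|+|X_j|$ whenever $a_{ij}^l \neq 0$) reduces the three CYBE contributions of each type to zero identically.

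The main, and essentially only nontrivial, obstacle is the bookkeeping of the super signs: the Koszul signs produced by the tensor reorderings in the commutators must match precisely the signs produced by ad-invariance together with parity conservation. The underlying argument is the classical proof of CYBE for the Lie bialgebra associated with a Manin triple; once the signs are tracked consistently, the cancellation is mechanical. Full details of the sign analysis are given in \cite{AGG}.
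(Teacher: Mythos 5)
The paper itself offers no proof of Lemma \ref{CYBE} — it is invoked as a fact ``well-known among experts'' — so your proposal fills in an argument the authors deliberately omitted, and it does so correctly, via the standard structure-constant computation for the Manin-triple $r$-matrix. Your three commutator expansions are right (the simplification of the Koszul signs uses that dual basis vectors have equal parity, which holds because $\msB$ is an even form, as the supertrace form is), the splitting of the CYBE sum into a $\mfp_1\ot\mfp_1\ot\mfp_2$ piece and a $\mfp_1\ot\mfp_2\ot\mfp_2$ piece is correct, and both invariance identities check out: with $[X_i,X_j]=\sum_k a_{ij}^k X_k$, $[X'_i,X'_j]=\sum_k b_{ij}^k X'_k$, $[X_i,X'_j]=\sum_k \bigl(c_{ij}^k X_k + d_{ij}^k X'_k\bigr)$ one indeed gets $a_{ij}^l=(-1)^{|X_i|+|X_l|}d_{jl}^i$ and $b_{ij}^l=-(-1)^{|X_j||X_l|}c_{lj}^i$. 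One point to make explicit when writing out the $\mfp_1\ot\mfp_1\ot\mfp_2$ cancellation: the coefficient of $X_a\ot X_b\ot X'_c$ coming from $[s_{12},s_{23}]$ involves $c_{ca}^b$, whereas your second identity as stated expresses $b_{ab}^c$ through $c_{cb}^a$; you must first apply super-antisymmetry $b_{ab}^c=-(-1)^{|X_a||X_b|}b_{ba}^c$ and then your identity, or equivalently use the pairing $\msB([X_i,X'_j],X'_k)=\msB(X_i,[X'_j,X'_k])$, which gives $b_{ab}^c=(-1)^{|X_b|+|X_c|}c_{ca}^b$ on the support $|X_b|=|X_a|+|X_c|$. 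With that supplement both types do cancel coefficientwise, using exactly the parity constraints you invoke, so the proof is sound.
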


We next compute the supercobracket $\delta$ using the identity $\msB(X, [Y_1,Y_2]) = \msB(\delta(X), Y_1 \otimes Y_2)$ for all $X\in\mfp_n$ and all $Y_1,Y_2 \in \mfb_n$.   
The formula for $\delta$ is (assuming, without loss of generality, that $|j|\le |i|$): 
 \begin{eqnarray} \delta(\msE_{ij}) & = &  \sum_{\stackrel{k=-n}{|j| < |k| < |i|}}^n (-1)^{p(k)+1} \big( \msE_{ik} \ot \msE_{kj} - (-1)^{(p(i)+p(k))(p(j)+p(k))} \msE_{kj} \ot \msE_{ik} \big) \notag  \\ 
 & & - \frac{1}{2} ((-1)^{p(i)}\msE_{ii} - (-1)^{p(j)} \msE_{jj}) \ot \msE_{ij} + \frac{1}{2} \msE_{ij} \ot ((-1)^{p(i)}\msE_{ii} - (-1)^{p(j)} \msE_{jj}) \notag  \\
 & & - \frac{\delta(i<0)}{2} \left(\msE_{i,-i}  \ot \msE_{-i,j} - (-1)^{p(j)}\msE_{-i,j} \ot \msE_{i,-i} \right) \label{cobr} \\
 & & + \frac{\delta(j>0)}{2} \left( (-1)^{p(i)}\msE_{-j,j} \ot \msE_{i,-j} + \msE_{i,-j} \ot \msE_{-j,j} \right)  \notag 
\end{eqnarray}

Finally, the super cobracket on $\mfp_n$ is related to the element $\mss$. The following lemma is standard.

\begin{lemma}
The super cobracket can also be expressed as \begin{equation} \delta(X) = [X\ot 1 + 1 \ot X,  \mss],\label{deltas}
 \end{equation}
 for $X \in \mfp_n$.
\end{lemma}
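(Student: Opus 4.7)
The plan is to verify the identity against the defining property of the supercobracket. Since $\msB^{\otimes 2}$ restricts to a non-degenerate pairing between $\mfp_n \otimes \mfp_n$ and $\mfb_n \otimes \mfb_n$, the element $\delta(X) \in \mfp_n \otimes \mfp_n$ is uniquely determined by the values $\msB^{\otimes 2}(\delta(X), Y_1 \otimes Y_2) = \msB(X, [Y_1, Y_2])$ for $Y_1, Y_2 \in \mfb_n$. Thus it suffices to (i) show that $[X \otimes 1 + 1 \otimes X, \mss]$ already lies in $\mfp_n \otimes \mfp_n$, and (ii) verify the pairing formula on an arbitrary test element.

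For (ii), I would expand using the super-Leibniz rule
\begin{equation*}
[X \otimes 1 + 1 \otimes X, \mss] = \sum_i \bigl( [X, X_i] \otimes X_i' + (-1)^{|X||X_i|} X_i \otimes [X, X_i'] \bigr),
\end{equation*}
where $\{X_i\} \subset \mfp_n$ and $\{X_i'\} \subset \mfb_n$ are the dual bases that can be read off from \eqref{equ-s}. Pair term by term with $Y_1 \otimes Y_2$, apply super ad-invariance of $\msB$ to move $X$ off each bracket, and then collapse via the completeness relation $\sum_i X_i \,\msB(X_i', Z) = $ the $\mfp_n$-component of $Z \in \mfgl_{n|n}$ (and its $\mfb_n$-analogue). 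Once the Koszul factor $(-1)^{|X_2||Y_1|}$ built into the definition of $\msB^{\otimes 2}$ is accounted for, the two summands recombine into $\msB(X, [Y_1, Y_2])$.

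For (i), the containment is not visible from the above expansion, since each summand has one tensor factor in $\mfb_n$. The standard workaround is to invoke the full Casimir tensor $\Omega \in \mfgl_{n|n}^{\otimes 2}$ of the form $\msB$, which by ad-invariance satisfies $[X \otimes 1 + 1 \otimes X, \Omega] = 0$. Writing $\Omega = \mss + \mss^{\vee}$, where $\mss^{\vee} \in \mfb_n \otimes \mfp_n$ is the signed super-flip of $\mss$ (so that the two pieces of $\Omega$ correspond to the summands $\mfp_n \otimes \mfb_n$ and $\mfb_n \otimes \mfp_n$ of the Manin decomposition), one obtains
\[ [X \otimes 1 + 1 \otimes X, \mss] = -[X \otimes 1 + 1 \otimes X, \mss^{\vee}]. \]
The left-hand side manifestly lies in $\mfp_n \otimes \mfgl_{n|n}$ and the right-hand side in $\mfgl_{n|n} \otimes \mfp_n$; their intersection is precisely $\mfp_n \otimes \mfp_n$.

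The main obstacle is sign bookkeeping: the super-commutator on $\mfgl_{n|n}^{\otimes 2}$, the Koszul sign in the definition of $\msB^{\otimes 2}$, and super ad-invariance each contribute signs that must be made compatible. Beyond signs, the identity is a completely standard consequence of the Manin supertriple structure from Section \ref{bisup}, reducing to ad-invariance plus duality of $\{X_i\}$ and $\{X_i'\}$ under $\msB$.
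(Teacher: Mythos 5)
The paper itself offers no argument for this lemma---it is simply declared ``standard''---so your write-up is judged on its own, and it is essentially correct. The two-step structure is exactly what is needed: since $\msB^{\ot 2}$ pairs $\mfp_n\ot\mfp_n$ nondegenerately with $\mfb_n\ot\mfb_n$, it suffices to show $[X\ot 1+1\ot X,\mss]\in\mfp_n\ot\mfp_n$ and then to match the pairing against arbitrary $Y_1\ot Y_2\in\mfb_n\ot\mfb_n$. Your ingredients are all sound: the super-Leibniz expansion $[X\ot 1+1\ot X,\,a\ot b]=[X,a]\ot b+(-1)^{|X||a|}a\ot[X,b]$, the invariance $\msB([X,Y],Z)=\msB(X,[Y,Z])$ of the supertrace form, and the decomposition $\Omega=\mss+\mss^{\vee}$ of the ad-invariant split Casimir of $(\mfgl_{n|n},\msB)$, which yields the containment in $(\mfp_n\ot\mfgl_{n|n})\cap(\mfgl_{n|n}\ot\mfp_n)=\mfp_n\ot\mfp_n$. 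One correction to your description of step (ii): when you pair the expansion with $Y_1\ot Y_2$, the first summand $\sum_i[X,X_i]\ot X_i'$ contributes nothing at all, since $\msB(X_i',Y_2)=0$ by isotropy of $\mfb_n$; the two summands do not ``recombine''---the entire value $\msB(X,[Y_1,Y_2])$ comes from the second summand, after invariance and the signed completeness relation $\sum_i X_i'\,\msB(X_i,Y_1)=(-1)^{|Y_1|}Y_1$ (the sign arising from supersymmetry of $\msB$ and $|X_i|=|X_i'|$). This is a matter of description rather than a gap, and it is subsumed in the sign bookkeeping you flag. An alternative route available within the paper is the pedestrian one: verify \eqref{deltas} directly by computing $[X\ot 1+1\ot X,\mss]$ from the explicit formula \eqref{equ-s} and comparing with the explicit cobracket \eqref{cobr}; your structural argument via the Manin supertriple is cleaner and is presumably what the authors mean by ``standard''.
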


\section{Quantized enveloping superalgebra}\label{sec:Uq}

In this section, we define the quantized enveloping superalgebra $\mfU_q\mfp_n$ following the approach used in \cite{FRT} and  \cite{Ol}. We use a solution $S$ of the quantum Yang-Baxter equation such that $\mss$ is the classical limit of $S$.

For simplicity,  denote  by $\C_q$ the field $\C(q)$ of rational functions in the variable $q$ and set $\C_q(n|n) = \C_q \ot_{\C} \C(n|n)$.

\begin{definition}\label{Sq}
Let $S \in \End_{\C_q}(\C_q(n|n)^{\ot 2})$ be given by the formula:
\begin{eqnarray}
S & = & 1 + \sum_{1\le i \le n} \big( (q-1)E_{ii} + (q^{-1}-1) E_{-i,-i} \big) \ot (E_{ii} + E_{-i,-i}) + \frac{q-q^{-1}}{2} \sum_{-n \le i\le -1} \msE_{i,-i} \ot E_{-i,i} \notag \\
& & +(q-q^{-1}) \sum_{1\le |j| < |i| \le n} (-1)^{p(j)} \msE_{ij} \ot E_{ji} \label{Smat}
\end{eqnarray}
\end{definition}

\begin{remark} \label{q-to-h} If we define $S$ instead as an element of $\End_{\C[[\hbar]]}(\C_{\hbar}(n|n)^{\ot 2})$ by the same formula as in definition \ref{Sq} but with $q,q^{-1}$ replaced by $e^{\hbar/2}, e^{-\hbar/2}$ and $\C_q(n|n)^{\ot 2}$ replaced by $\C_{\hbar}(n|n)^{\ot 2}$, which equals $\C(n|n)^{\ot 2}[[\hbar]]$, then $S=1+\hbar\mss+O(\hbar^2)$. 
\end{remark}

\begin{theorem} \label{QYBE}
$S$ is a solution of the quantum Yang-Baxter equation: $S_{12} S_{13} S_{23} = S_{23} S_{13} S_{12}$.
\end{theorem}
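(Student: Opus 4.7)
The plan is to verify the identity $S_{12}S_{13}S_{23} = S_{23}S_{13}S_{12}$ by direct computation: evaluate both sides on each basis vector $e_a\otimes e_b\otimes e_c$ of $\C_q(n|n)^{\otimes 3}$, expand into a sum of basis vectors $e_{a'}\otimes e_{b'}\otimes e_{c'}$, and check that the coefficients agree. For bookkeeping I would decompose $S = D + F + P$, where $D$ collects the diagonal summands of \eqref{Smat}, $F$ is the sum over $1 \le |j| < |i| \le n$ of the $\msE_{ij} \otimes E_{ji}$ terms, and $P = \tfrac{q-q^{-1}}{2}\sum_{-n \le i \le -1} \msE_{i,-i} \otimes E_{-i,i}$ is the periplectic piece.

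Two structural observations reduce the workload before any explicit calculation. First, each summand of $S$ preserves the weight on $\C_q(n|n)^{\otimes 2}$ induced by $\mathrm{wt}(e_i) = \sgn(i)$, so the QYBE splits according to the total weight of $e_a\otimes e_b\otimes e_c$. Second, the matrix entries of each two-slot factor of $S$ only involve the absolute values of the indices already present on the input of that factor. Together these reduce the verification on $e_a \otimes e_b \otimes e_c$ to a computation inside the finite-dimensional subspace spanned by basis vectors whose indices lie in the set of absolute values $\{|a|,|b|,|c|\}$. Thus it suffices to verify the QYBE for $S$ acting on $\C_q(3|3)^{\otimes 3}$, a finite symbolic check independent of $n$.

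Within this finite check, the generic subcase where $|a|, |b|, |c|$ are pairwise distinct collapses considerably: the periplectic piece $P$ cannot contribute in any slot (it would require two coinciding absolute values), and in the expansion $\msE_{ij} = E_{ij} - (-1)^{p(i)(p(j)+1)}E_{-j,-i}$ only the first summand is active (the second would again force a coincidence). On the relevant subspace $S$ restricts to a standard Jimbo-type $R$-matrix, and the QYBE follows from the classical computation. The genuinely new input is the case in which two or three of the absolute values coincide, where $P$, both summands of each $\msE_{ij}$, and the $q$- and $q^{-1}$-contributions to $D$ all interact.

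The main obstacle is precisely this residual cancellation. The signs $(-1)^{p(i)(p(j)+1)}$ from the definition of $\msE_{ij}$ combine with the Koszul signs of the super tensor product, and with the $q$- and $q^{-1}$-coefficients, to produce a large number of cross-terms that must cancel in a precise way; any sign miscount at this stage would immediately break the identity. A useful consistency check throughout is the classical limit: by Remark~\ref{q-to-h}, the substitution $q = e^{\hbar/2}$ yields $S = 1 + \hbar\mss + O(\hbar^2)$, so the order-$\hbar^2$ part of the QYBE must reduce to the classical Yang-Baxter identity $[\mss_{12},\mss_{13}]+[\mss_{12},\mss_{23}]+[\mss_{13},\mss_{23}]=0$ provided by Proposition~\ref{prop-cybe}. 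The complete verification is lengthy but straightforward polynomial algebra, carried out in detail in \cite{AGG}.
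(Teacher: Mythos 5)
Your second ``structural observation'' is false, and the whole finiteness reduction rests on it. The claim that each two-slot factor of $S$ only produces basis vectors whose absolute values already occur in its input fails because of the contraction-type summands hidden inside $\msE_{ij}=E_{ij}-(-1)^{p(i)(p(j)+1)}E_{-j,-i}$: for example, the term $(q-q^{-1})\msE_{21}\ot E_{12}$ sends $e_{-2}\ot e_2$ to a nonzero multiple of $e_{-1}\ot e_1$, so $S(e_{-2}\ot e_2)$ involves the new absolute value $1$, and more generally $S$ maps a ``contracted pair'' $e_{-i}\ot e_i$ onto combinations of $e_{-j}\ot e_j$ for \emph{all} $|j|<|i|$. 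As a consequence, matrix entries of $S_{12}S_{13}S_{23}$ genuinely contain sums over an internal dummy index: acting on $e_a\ot e_{-c}\ot e_c$, the chain ($S_{23}$ contracts to $e_{-k}\ot e_k$, $S_{13}$ swaps $a$ and $k$, $S_{12}$ contracts $e_k\ot e_{-k}$ down to $e_{-m}\ot e_m$) contributes to the coefficient of a \emph{fixed} output vector a sum over all $k$ with $\max(|a|,|m|)<|k|<|c|$, whose length grows with $n$. So the verification is not manifestly a computation inside a subspace indexed by $\{|a|,|b|,|c|\}$, and checking the identity on $\C_q(3|3)^{\ot 3}$ does not establish it for general $n$ unless you prove that all such dummy-index sums cancel (e.g.\ pairwise in $k\leftrightarrow -k$ via the parity signs) --- which is precisely the hard cancellation you then defer entirely to \cite{AGG}. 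The generic-indices case and the classical-limit sanity check are fine but carry none of the weight; as written, the proposal has a genuine gap.

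For comparison, the paper avoids a basis-by-basis verification altogether: it writes $S=1+(q-q^{-1})\mss+\bigl(\tfrac{q+q^{-1}}{2}-1\bigr)C$ with $C=\sum_i(E_{ii}+E_{-i,-i})^{\ot 2}$, regards $f(q)=S_{12}S_{13}S_{23}-S_{23}S_{13}S_{12}$ as a Laurent polynomial of degree $3$ in $q$ with operator coefficients, and shows enough vanishing of $f$ and its derivatives at $q=\pm1,\pm\sqrt{-1}$ to force $f(q)$ to be a multiple of $(q-q^{-1})^3$, finally killing the top coefficient. This reduces the theorem to a small number of $q$-independent operator identities (the classical Yang--Baxter equation for $\mss$ from Lemma~\ref{CYBE}, $[\mathsf{s}C]=2[\mathsf{s}CC]$, $[\mathsf{s}\mathsf{s}C]=0$, and the vanishing of \eqref{q3}), each of which is a finite, $n$-uniform computation. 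If you want to salvage your route, you would need either an honest argument that the QYBE for this $S$ is stable in $n$ (controlling the internal sums described above), or to adopt an interpolation/decomposition device of the paper's kind.
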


\begin{proof}  The proof consists of verifying long computations. To simplify them, we have used the following method.  Set $f(q)= S_{12}S_{13}S_{23} - S_{23}S_{13}S_{12}$. The main idea is to consider $f(q)$ as a Laurent polynomial $\sum_{i=-3}^3 f_iq^i$ with coefficients $f_i$ in $\End_{\mathbb{C}}\left(\C_{n\mid n}^{\otimes 3}\right)$. Then one shows the eight relations $f(a) = 0$, $f'(b) = 0$, $f''(c) = 0$ for $a,b,c=\pm 1$ and $b=\pm\sqrt{-1}$. (Actually, just seven of those are enough.) We can then deduce that $f(q)$ is a scalar multiple of $(q-q^{-1})^3$ and we show that the coefficient of $q^3$ in $f(q)$ is zero.  

Here are some more details. 

Let us set  
$$
 C = \sum\limits_{1\leq i \leq n} (E_{ii}+E_{-i,-i})\otimes(E_{ii}+E_{-i,-i}).
$$
Then 
\begin{equation*}
    S = 1 + (q-q^{-1})\mathsf{s} + \left(\frac{q+q^{-1}}{2}-1\right)C.
\end{equation*}

For convenience, we introduce the following notation:
\begin{align*}
[\mathsf{s}C] = {} & \mathsf{s}_{12}C_{13} + \mathsf{s}_{12}C_{23} + \mathsf{s}_{13}C_{23} + C_{12}\mathsf{s}_{13} + C_{12}\mathsf{s}_{23} + C_{13}\mathsf{s}_{23} \\
{} & - \mathsf{s}_{23}C_{13} - \mathsf{s}_{23}C_{12} - \mathsf{s}_{13}C_{12} - C_{23}\mathsf{s}_{13} - C_{23}\mathsf{s}_{12} - C_{13}\mathsf{s}_{12}\\
[\mathsf{s}CC] = {} & \mathsf{s}_{12}C_{13}C_{23} + C_{12}\mathsf{s}_{13}C_{23} + C_{12}C_{13}\mathsf{s}_{23} - \mathsf{s}_{23}C_{13}C_{12} - C_{23}S_{13}C_{12} - C_{23}C_{13}\mathsf{s}_{12}\\
[\mathsf{s}\mathsf{s}C] = {} & \mathsf{s}_{12}\mathsf{s}_{13}C_{23} + C_{12}\mathsf{s}_{13}\mathsf{s}_{23} + \mathsf{s}_{12}C_{13}\mathsf{s}_{23} - \mathsf{s}_{23}\mathsf{s}_{13}C_{12} - C_{23}\mathsf{s}_{13}\mathsf{s}_{12} - \mathsf{s}_{23}C_{13}\mathsf{s}_{12}
\end{align*}

The relations $f(a) = 0$, $f'(b) = 0$, $f''(c) = 0$ for $a,b,c=\pm 1$ and $b=\pm\sqrt{-1}$ follow from the next two lemmas and checking these involves explicit computations.

\begin{lemma} \label{lem-1}
$[\mathsf{s}C] = 2[\mathsf{s}CC]$
\end{lemma}

\begin{lemma} \label{lem-2}
$[\mathsf{s}\mathsf{s}C] = 0$
\end{lemma}

For instance,  $f'(-1)=0$ follows from $f'(-1) = -4[\mathsf{s} C] + 8[\mathsf{s} CC]$ and the two lemmas. Furthermore, $$f^{\prime\prime}(-1) =  -4[\mathsf{s}C] + 8[\mathsf{s}CC] - 16[\mathsf{s}\mathsf{s}C] + 8([\mathsf{s}_{12},\mathsf{s}_{13}] + [\mathsf{s}_{12},\mathsf{s}_{23}] + [\mathsf{s}_{13},\mathsf{s}_{23}]).$$ Therefore, $f^{\prime\prime}(-1)=0$ thanks to Lemmas \ref{CYBE}, \ref{lem-1}, and \ref{lem-2}.  Similarly, the two lemmas above imply that $$f^{\prime}(\sqrt{-1}) = 2\sqrt{-1}[\mathsf{s}C] - 4\sqrt{-1}[\mathsf{s}CC] - 4[\mathsf{s}\mathsf{s}C]$$  vanishes.

The last step in the proof of Theorem \ref{QYBE} is to show the vanishing of the coefficient $f_3$ of $q^3$. We have $$f_3 = \mss_{12}\mss_{13}\mss_{23} - \mss_{23}\mss_{13}\mss_{12} + \frac{1}{4}[sCC] + \frac{1}{2}[ssC] + \frac{1}{8}C_{12}C_{13}C_{23} - \frac{1}{8}C_{23}C_{13}C_{12},$$ which simplifies to \begin{equation} \mss_{12}\mss_{13}\mss_{23} - \mss_{23}\mss_{13}\mss_{12} + \frac{1}{4}[sCC] \label{q3} \end{equation} thanks to Lemma \ref{lem-2} and $C_{12}C_{13}C_{23} - C_{23}C_{13}C_{12}=0$. Verifying that \eqref{q3} vanishes  follows by direct and extensive computations. \end{proof}

With the aid of $S$, we can now define the main object of interest in this paper.

\begin{definition}
The \emph{quantized enveloping superalgebra of $\mfp_n$} is the $\Z_2$-graded $\C_q-$algebra $\mfU_q\mfp_n$ generated by elements $t_{ij},t_{ii}^{-1}$ with $1 \le |i|\le |j|\le n$ and $i,j\in \{ \pm 1,\ldots, \pm n  \}$ which satisfy the following relations: \begin{equation} t_{ii} = t_{-i,-i},\,\, t_{-i,i}=0 \text{ if } i>0,\,\, t_{ij}=0 \text{ if } |i|>|j|;  \end{equation} \begin{equation} T_{12} T_{13} S_{23} = S_{23} T_{13} T_{12} \label{rttf} \end{equation} where $T = \sum_{|i|\le |j|} t_{ij} \ot_{\C} E_{ij}$ and the last equality holds in $\mfU_q\mfp_n \ot_{\C(q)} \End_{\C(q)}(\C_q(n|n))^{\ot 2}$. The $\Z_2$-degree of $t_{ij}$ is $p(i)+p(j)$.
\end{definition}

\begin{remark} \label{rem-odd-zero}
One immediate corollary of the definition above is that if $t_{ij}$ is odd, then $t_{ij}^2=0$. This follows for example after taking $i =k$ and $j = l$ in \eqref{exprel}.
\end{remark}

$\mfU_q\mfp_n$ is a Hopf algebra with antipode given by $T \mapsto T^{-1}$ and with coproduct given by $$\Delta(t_{ij})=\sum_{k=-n}^n (-1)^{(p(i) + p(k))(p(k)+p(j))} t_{ik} \ot t_{kj}.$$

\section{Limit when $q\mapsto 1$ and quantization}\label{limquant}

We want to explain how $\mfU\mfp_n$ can be viewed as the limit when $q\mapsto 1$ of $\mfU_q\mfp_n$ and how the co-Poisson Hopf algebra structure on $\mfU\mfp_n$, which is inherited from the cobracket $\delta$ on $\mfp_n$, can be recovered from the coproduct on $\mfU_q\mfp_n$. 

Set $\tau_{ij} = \frac{t_{ij}}{q-q^{-1}}$ if $i\neq j$ and set $\tau_{ii} = \frac{t_{ii}-1}{q-1}$. Let $\mcA$ be the localization of $\C[q,q^{-1}]$ at the ideal generated by $q-1$. Let $\mfU_{\mcA}\mfp_n$ be the $\mcA$-subalgebra of $\mfU_q\mfp_n$ generated by $\tau_{ij}$ when $1\le |i| \le |j| \le n$.

\begin{theorem}\label{defprop}
The map $\psi: \mfU\mfp_n \lra \mfU_{\mcA} \mfp_n/(q-1)\mfU_{\mcA}\mfp_n$ given by $\psi(\msE_{ji}) = (-1)^{p(j)} \ol{\tau}_{ij}$ for $|i|<|j|$, $1\le i=j\le n$,  and $\displaystyle \psi(\msE_{-i,i}) =-2\overline{\tau}_{i,-i}$ for $1\leq i \leq n$, is an associative $\C$-superalgebra  isomorphism.
\end{theorem}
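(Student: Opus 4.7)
The plan is to construct $\psi$ as a well-defined algebra homomorphism from bracket relations extracted from the semiclassical limit of the RTT equation, and then prove bijectivity via a PBW-type argument.

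The central step is to expand $T_{12}T_{13}S_{23} = S_{23}T_{13}T_{12}$ in powers of $q-1$ inside $\mfU_{\mcA}\mfp_n \otimes \End(\C_q(n|n))^{\otimes 2}$. Writing $T = I + (q-1)\ell + O((q-1)^2)$ with $\ell = \sum_i \tau_{ii}E_{ii} + 2\sum_{i\neq j}\tau_{ij}E_{ij}$ (using $t_{ii} = 1 + (q-1)\tau_{ii}$, $t_{ij} = (q-q^{-1})\tau_{ij}$, and $q-q^{-1} = 2(q-1) + O((q-1)^2)$) and $S = I + 2(q-1)\mss + O((q-1)^2)$ (by direct expansion, cf.\ Remark \ref{q-to-h}), the terms of orders $(q-1)^0$ and $(q-1)^1$ cancel trivially. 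At order $(q-1)^2$, the contributions from the $(q-1)^2$-coefficients of $T$ and $S$ appear symmetrically on both sides and cancel, leaving, after reduction modulo $(q-1)$, the matrix identity
\begin{equation*}
[\bar\ell_{12}, \bar\ell_{13}] + 2[\bar\ell_{12}, \mss_{23}] + 2[\bar\ell_{13}, \mss_{23}] = 0
\end{equation*}
in $(\mfU_{\mcA}\mfp_n/(q-1)) \otimes \End(\C(n|n))^{\otimes 2}$. Reading off the coefficient of $E_{ab}\otimes E_{cd}$ for each pair yields an explicit supercommutation relation between $\bar\tau_{ab}$ and $\bar\tau_{cd}$. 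Translating through the prescribed formulas $\psi(\msE_{ji}) = (-1)^{p(j)}\bar\tau_{ij}$ and $\psi(\msE_{-i,i}) = -2\bar\tau_{i,-i}$, the bracket $[\bar\ell_{12}, \bar\ell_{13}]$ supplies the first two ($\mfgl_{n|n}$-type) terms of (\ref{supbr}), while the $[\bar\ell, \mss]$-pieces produce the two $\iota$-correction terms carrying $\delta(i<0)$ and $\delta(j>0)$; the scalar $-2$ in $\psi(\msE_{-i,i})$ is forced precisely by the requirement that these correction terms match with the correct coefficients. This shows that $\psi$ respects (\ref{supbr}), hence is a well-defined $\C$-superalgebra homomorphism.

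Surjectivity is immediate since the $\bar\tau_{ij}$ generate $\mfU_{\mcA}\mfp_n/(q-1)\mfU_{\mcA}\mfp_n$ and lie in $\mathrm{Im}(\psi)$ up to nonzero scalars. Injectivity is the main obstacle and I would address it via a PBW argument. Fix a linear order on the set $\{\tau_{ij}\}$ and argue, by iteratively applying the full RTT equation to rewrite out-of-order products, that ordered monomials span $\mfU_{\mcA}\mfp_n$ as an $\mcA$-module. Reducing modulo $(q-1)$ then gives a spanning set of the target whose elements correspond, under $\psi^{-1}$ on generators, to ordered monomials in the $\msE_{ji}$, which constitute the PBW basis of $U\mfp_n$. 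Consequently $\psi$ sends a basis to a spanning set of equal cardinality and must be an isomorphism. The technical heart is establishing the PBW-spanning property for $\mfU_q\mfp_n$; the cleanest route is to filter the algebra by total degree in the $\tau_{ij}$ and show the associated graded is a (super-)commutative polynomial algebra, then invoke the standard deformation-theoretic PBW theorem for RTT-type quantum superalgebras.
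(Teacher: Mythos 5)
Your construction of $\psi$ as a homomorphism is sound and is essentially the paper's argument in different clothing: the paper writes out the $E_{ij}\ot E_{kl}$--coefficient of the RTT relation exactly (its identity \eqref{exprel}) and reduces modulo $q-1$, which amounts to the same order-$(q-1)^2$ extraction you perform; surjectivity is likewise the same. The genuine gap is in your injectivity argument. Knowing that the ordered monomials \emph{span} $\mfU_{\mcA}\mfp_n$ (the straightening step, which is indeed provable from the RTT relations) only tells you that $\psi$ carries the PBW basis of $\mfU\mfp_n$ onto a spanning set of $\mfU_{\mcA}\mfp_n/(q-1)\mfU_{\mcA}\mfp_n$; for infinite-dimensional spaces a surjection sending a basis to a spanning set ``of equal cardinality'' need not be injective --- you would need the images of the reduced monomials to be linearly independent in the quotient, and that is precisely the hard half of the PBW statement you are implicitly assuming. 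There is no off-the-shelf ``deformation-theoretic PBW theorem for RTT-type quantum superalgebras'' to invoke here: flatness of $\mfU_{\mcA}\mfp_n$ over $\mcA$ (equivalently, that the associated graded is the free supercommutative algebra) is exactly what is at stake, and in this paper the linear-independence half of the PBW theorem for $\mfU_q\mfp_n$ is proved \emph{using} Theorem \ref{defprop}, so your route is circular within the paper's logical structure unless you supply an independent proof (e.g.\ a full diamond-lemma verification of all overlap ambiguities, which you do not attempt).

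What the paper does instead, and what you are missing, is a representation-theoretic argument: since $S$ satisfies the quantum Yang--Baxter equation, $T\mapsto S$ makes $\C_{\mcA}(n|n)$ and all tensor powers $\C_{\mcA}(n|n)^{\ot\ell}$ into $\mfU_{\mcA}\mfp_n$-modules; reducing these modulo $q-1$ and composing with $\psi$ recovers the natural $\mfp_n$-action on $\C(n|n)^{\ot\ell}$ (twisted by the sign automorphism $\msE_{ij}\mapsto(-1)^{p(i)+p(j)}\msE_{ij}$). Since $\C(n|n)$ is a faithful $\mfp_n$-module, the product of these maps over all $\ell\ge 1$ is injective on $\mfU\mfp_n$, which forces $\psi$ to be injective. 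Some such auxiliary faithfulness input (or an equally substantial substitute) is needed; your proposal as written does not close this step.
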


\begin{proof}
First, we need to write down explicitly the defining relation \eqref{rttf}. Comparing coefficients of $E_{ij} \ot E_{kl}$ on both sides of relation \eqref{rttf}, we obtain: 
\begin{equation} 
\begin{split}  
& (-1)^{(p(i)+p(j))(p(k)+p(l))} t_{ij}t_{kl} - t_{kl}t_{ij} + \theta(i,j,k) \big( \delta_{|j|<|l|} - \delta_{|k|<|i|} \big) \eps t_{il}t_{kj} \\
& \qquad + (-1)^{(p(i)+p(j))(p(k)+p(l))} \big( \delta_{j>0}(q-1) + \delta_{j<0}(q^{-1}-1) \big) \big(\delta_{jl} + \delta_{j,-l} \big) t_{ij}t_{kl} \\
& \qquad  \qquad  - \big( \delta_{i>0}(q-1) + \delta_{i<0}(q^{-1}-1) \big) \big( \delta_{ik} + \delta_{i,-k} \big) t_{kl}t_{ij} \label{exprel} \\
&  \qquad  \qquad  \qquad + \theta(i,j,k) \delta_{j>0}  \delta_{j,-l}  \eps t_{i,-j}t_{k,-l} - (-1)^{p(j)} \delta_{i<0} \delta_{i,-k}  \eps t_{-k,l}t_{-i,j} \\
 & \qquad  + (-1)^{p(j)(p(i)+1)} \eps \sum_{-n \le a \le n} \big( (-1)^{p(i)p(a)}\theta(i,j,k) \delta_{j,-l} \delta_{|a|<|l|} t_{i,-a}t_{ka} + (-1)^{p(-j)p(a)}\delta_{i,-k} \delta_{|k|<|a|} t_{al}t_{-a,j}\big) \\
& \qquad  = 0  
\end{split} 
\end{equation}
In the identity above, we set
$$
\theta(i,j,k) = \sgn (\sgn(i)+\sgn(j)+\sgn(k)) \text{ and } \eps=q-q^{-1}.
$$

In order to check that  $\psi( [\msE_{ji}, \msE_{kl}])=[\psi(\msE_{ji}), \psi(\msE_{kl})]$, we proceed as follows. We apply $\psi$ on both sides of  \eqref{supbr}. To show that the resulting right hand side coincides with  $[\psi(\msE_{ji}), \psi(\msE_{kl})]$, we use \eqref{exprel} and pass to the quotient $ \mfU_{\mcA} \mfp_n/(q-1)\mfU_{\mcA}\mfp_n$. This is done via a long case-by-case verification for $i,j,k,l$. 

From the way $\mfU_{\mcA}\mfp_n$ is defined, it follows that $\psi$ is surjective. It remains to prove that it is injective. Since $S$ is a solution of the quantum Yang-Baxter equation, the space $\C_q(n|n)$ is a representation of $\mfU_q\mfp_n$ via the assignment $t_{ij}\mapsto s_{ij}$ (where $S = \sum_{i,j=-n}^n s_{ij} \otimes E_{ij}$), hence also of $\mcU_{\mcA}\mfp_n$ by restriction. More explicitly, $$\tau_{ij} \mapsto (-1)^{p(i)}\msE_{ji} \text{ if $|i|<|j|$},\text{ and } \tau_{i,-i} \mapsto E_{-i,i},\, \tau_{ii} \mapsto (E_{ii} - q^{-1} E_{-i,-i}) \text{ if $1\le i \le n$.}$$ Set $\C_{\mcA}(n|n) = \mcA\ot_{\C}\C(n|n)$. The  space $\C_{\mcA}(n|n)$  is a $\mfU_{\mcA}\mfp_n$-submodule and so are all the tensor powers $\C_{\mcA}(n|n)^{\ot\ell}$. We thus have a superalgebra homomorphism $\phi_{\ell}:\mcU_{\mcA}\mfp_n \lra \End_{\mcA}(\C_{\mcA}(n|n)^{\ot\ell})$ for each $\ell\ge 1$.

Let $\pi_{\ell}$ be the quotient homomorphism $$\End_{\mcA}(\C_{\mcA}(n|n)^{\ot\ell}) \lra \End_{\mcA}(\C_{\mcA}(n|n)^{\ot\ell})/(q-1)\End_{\mcA}(\C_{\mcA}(n|n)^{\ot\ell})\cong \End_{\C}(\C(n|n)^{\ot\ell}).$$ The composite $\pi_{\ell}\circ\phi_{\ell}$ descends to a homomorphism $\ol{\pi_{\ell}\circ\phi_{\ell}}$ from  $\mfU_{\mcA} \mfp_n/(q-1)\mfU_{\mcA}\mfp_n$ to $\End_{\C}(\C(n|n)^{\ot\ell})$. The composite $\ol{\pi_{\ell}\circ\phi_{\ell}}\circ\psi$ is the superalgebra homomorphism $\mfU\mfp_n\lra \End_{\C}(\C(n|n)^{\ot\ell})$ induced by the natural $\mfp_n$-module structure on $\C(n|n)^{\ot\ell}$ twisted by the automorphism of $\mfp_n$ given by $\msE_{ij}\mapsto (-1)^{p(i)+p(j)} \msE_{ij}$.

We can combine the homomorphisms $\ol{\pi_{\ell}\circ\phi_{\ell}}\circ\psi$ for all $\ell\ge 1$ to obtain a homomorphism $\mcU\mfp_n \lra \prod_{\ell=1}^{\infty} \End_{\C}(\C(n|n)^{\ot\ell})$. This map is injective since $\C(n|n)$ is a faithful representation of $\mfp_n$. It follows that $\psi$ is injective as well.  \end{proof}

We next show that a PBW-type theorem holds for $\mfU_q\mfp_n$. For this, we first  introduce a total  order $\prec$ on the set of generators $t_{ij}$, $1 \leq |i|\leq |j| \leq n$, of $\mfU_q\mfp_n$ as follows.
We declare that $t_{ij} \prec t_{kl}$ if
\begin{itemize}
\item[(i)]  $\vert i \vert > \vert k \vert$, or
\item[(ii)] $\vert i \vert = \vert k \vert $ and $\vert j \vert > \vert l \vert$, or
\item[(iii)] $i = k$ and $j=-l>0$, or 
\item[(iv)] $i=-k>0$ and $|j|=|l|$.
\end{itemize}
This order leads to a total lexicographic order on the set of words formed by the generators $t_{ij}$.  Namely, if $A=A_1 \cdots A_r$ and $B=B_1 \cdots B_s$ are two such words in the sense that each $A_k$ for $1\le k\le r$ and each $B_l$ for $1\le l\le s$ is equal to some generator $t_{ij}$, then $A\prec B$ if $r<s$ or if $r=s$ and there is a $p$ such that $A_k = B_k$ for $1\le k\le p-1$ and $A_p \prec B_p$. Note that, in this order,  the generators $t_{ij}$ with $i=j$ or $i=-j$ are not grouped together.  We call a generator of the from $t_{ii}$ \emph{diagonal}. Also, a word $A_1^{k_1}...A_r^{k_r}$  in the generators $t_{ij}$  is called a \emph{reduced monomial} if  $A_1 \prec \cdots \prec A_r $, and $k_i \in {\mathbb Z}_{>0}$ if $A_i$ is not diagonal, $k_i \in {\mathbb Z}\setminus\{0\}$ if $A_i$ is diagonal, and $k_i=1$ if $A_i$ is odd.

\begin{theorem}
The reduced monomials form a basis of $\mfU_q\mfp_n$ over $\C_q$.
\end{theorem}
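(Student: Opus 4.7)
The plan is to prove the theorem in two steps: spanning and linear independence.

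For spanning, I would define a complexity function on words in the generators $t_{ij}^{\pm 1}$, for instance the pair (total number of non-diagonal generators, number of inversions with respect to $\prec$), ordered lexicographically. Then I would use the explicit form of the defining relation \eqref{exprel}, rewritten as
\[
  t_{kl}t_{ij} \;=\; \pm\, t_{ij}t_{kl} + (q-q^{-1})\cdot (\text{bilinear correction terms}) + (q^{\pm 1}-1)\cdot (\text{quadratic terms}),
\]
as a rewriting rule whenever $t_{ij}\prec t_{kl}$: each application either strictly decreases the number of inversions or keeps the leading combinatorial type the same while the correction terms are shorter. For the odd generators, the specialization $(i,j)=(k,l)$ of \eqref{exprel} produces a relation whose leading part is $-2t_{ij}^{2}$, equal to a combination of products of strictly smaller words, which eliminates all powers $t_{ij}^{k}$ with $k\ge 2$ for $p(i)+p(j)=1$. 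Finally, the relation between $t_{ii}$ and $t_{ii}^{-1}$ lets us put all diagonal powers in a single factor $t_{ii}^{k_i}$. A standard Bergman diamond lemma style argument then shows that every word in the $t_{ij}^{\pm 1}$ can be rewritten as a $\C_q$-linear combination of reduced monomials.

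For linear independence, I would use the classical limit developed in Section \ref{limquant}. Suppose we had a nontrivial relation $\sum_\alpha c_\alpha M_\alpha = 0$ in $\mfU_q\mfp_n$ among distinct reduced monomials $M_\alpha$. Each $M_\alpha$ can be rewritten in terms of the integral generators $\tau_{ij}$ of $\mfU_{\mcA}\mfp_n$ via $t_{ij}=(q-q^{-1})\tau_{ij}$ for $i\neq j$, while the diagonal factors $t_{ii}^{k_i}$ become $(1+(q-1)\tau_{ii})^{k_i}$ after expansion (for negative $k_i$, one works in the localization of $\mfU_{\mcA}\mfp_n$ at the set of $t_{ii}$'s, or equivalently one formally adjoins inverses). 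To each reduced monomial $M_\alpha$ one attaches the pair consisting of (i) its total power of $(q-q^{-1})$ arising from the off-diagonal factors, and (ii) the associated PBW word obtained by replacing each $\tau_{ij}$ by $\psi^{-1}(\ol{\tau}_{ij})$; by construction, distinct reduced monomials produce distinct PBW words in the $\msE_{ji}$ of $\mfp_n$. After multiplying the relation $\sum c_\alpha M_\alpha = 0$ by a suitable power of $(q-q^{-1})$ to clear denominators and obtain an equation in $\mfU_{\mcA}\mfp_n$, we reduce modulo $(q-1)$ and apply the inverse of $\psi$ from Theorem \ref{defprop}. The leading terms give a nontrivial linear dependence among classical PBW monomials in $\mfU\mfp_n$, contradicting the PBW theorem for the Lie superalgebra $\mfp_n$.

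The main obstacle is the leading-term bookkeeping in the linear-independence step: different reduced monomials scale by different powers of $(q-q^{-1})$ and $(q-1)$, so one must partition the collection $\{M_\alpha\}$ by these two numerical invariants, handle negative diagonal exponents via the geometric-series expansion of $(1+(q-1)\tau_{ii})^{-1}$, and verify that the correspondence $M_\alpha \mapsto (\text{PBW word in } \mfU\mfp_n)$ is injective within each stratum. If a direct leading-term argument proves cumbersome, an alternative is to invoke the faithful representations $\phi_{\ell}:\mfU_{\mcA}\mfp_n\to \End_{\mcA}(\C_{\mcA}(n|n)^{\otimes \ell})$ from the proof of Theorem \ref{defprop}: for $\ell$ sufficiently large, the matrix coefficients of PBW monomials in $\mfU\mfp_n$ acting on $\C(n|n)^{\otimes \ell}$ separate them, so the same separation lifts to $\mfU_{\mcA}\mfp_n$ and then to $\mfU_q\mfp_n$.
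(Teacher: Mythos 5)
Your overall strategy coincides with the paper's: spanning via the explicit RTT relations \eqref{exprel} used to reorder quadratic products, and linear independence by passing to the $\mcA$-form, reducing modulo $(q-1)$, and invoking the classical PBW theorem for $\mfU\mfp_n$ through the isomorphism of Theorem \ref{defprop}. But two of your steps, as stated, would not go through. On the spanning side, your termination measure (number of non-diagonal generators, number of inversions) does not strictly decrease: in the case $|i|<|k|$, $j=-l$, relation \eqref{exprel} couples the two mis-ordered products $t_{ij}t_{k,-j}$ and $t_{i,-j}t_{kj}$ and produces further terms $t_{i,-a}t_{ka}$ with $|k|\le|a|<|j|$, which for generic $a$ have the same length, the same number of off-diagonal factors, and the same inversion count, so there are no ``shorter correction terms.'' The paper handles this by pairing \eqref{eq-subcase-iii} with the equation obtained by swapping $j$ and $-j$, solving the resulting $2\times 2$ linear system for the two mis-ordered products, and running a descending induction on $|j|$; some induction of this kind (and a check that the small linear systems are invertible over $\C_q$) must be built into your rewriting order before a diamond-lemma argument can close.

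On the independence side, the key claim ``distinct reduced monomials produce distinct PBW words'' is false as you set it up: reduction modulo $(q-1)$ sends every diagonal generator $t_{ii}^{\pm 1}$ to $1$, so two reduced monomials differing only in diagonal exponents (say $t_{ii}^{2}$ and $t_{ii}^{3}$) yield the same leading word. Separating them forces you to genuinely expand $t_{ii}^{k}=(1+(q-1)\tau_{ii})^{k}$ and track $(q-1)$-valuations alongside the $(q-q^{-1})$-powers coming from off-diagonal factors; moreover, negative exponents take you outside $\mfU_{\mcA}\mfp_n$ (which is generated by the $\tau_{ij}$, and $t_{ii}^{-1}$ does not obviously lie in it), so the proposed localization needs justification compatible with reduction mod $(q-1)$. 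The paper avoids all of this bookkeeping by phrasing the independence statement for reduced monomials in the $\tau_{ij}$ from the outset and showing that coefficients in $\mcA$ are forced into $(q-1)\mcA$. Finally, your fallback via the faithful representations $\phi_{\ell}$ does not remove the difficulty: faithfulness of $\prod_{\ell}\End_{\C}(\C(n|n)^{\ot\ell})$ separates elements of the classical algebra $\mfU\mfp_n$, but you still need the classical-limit comparison relating the quantum monomials to classical PBW monomials, which is precisely the bookkeeping at issue.
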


\begin{proof} We first show that the set of  reduced monomials spans $\mfU_q\mfp_n$. Note that it is enough to show that all quadratic monomials are in the span of this set.  Let $t_{ij}t_{kl}$ be a quadratic monomial which is not reduced. We have that either $t_{kl} \neq t_{ij}$, or $i = k, \, j=l$ and $t_{ij}$ is odd. In the latter case, as explained in Remark \ref{rem-odd-zero}, $t_{ij}^2$=0. In the former case, we proceed with a case-by-case reasoning considering seven mutually exclusive subcases:
\begin{itemize}
\item[(a)] $|i| < |k|$ and $|j|\neq |l|$.
\item[(b)] $|i|<|k|$ and  $j=l$.
\item[(c)] $|i|<|k|$ and  $j=-l$. 
\item[(d)] $|i|=|k|$ and $|j| < |l|$.
\item[(e)] $i=k$  and $j=-l<0$. 
\item[(f)] $i=-k<0$ and $j=l$. 
\item[(g)] $i=-k<0$ and $j=-l$.
\end{itemize}

Let's consider in some details subcase (c). The remaining subcases are handled in a similar manner. In subcase (c),  (\ref{exprel}) simplifies to:
\begin{equation} \label{eq-subcase-iii}
\begin{split} 
& (-1)^{(p(i)+p(j))(p(k)+p(-j))}\big( \delta_{j>0}q + \delta_{j<0}q^{-1} \big)  t_{ij}t_{k,-j} - t_{k,-j}t_{ij} + \theta(i,j,k) \delta_{j>0} \eps t_{i,-j}t_{kj}  \\
 & \qquad  + (-1)^{p(j)(p(i)+1)} \eps \sum_{-n \le a \le n} (-1)^{p(i)p(a)}\theta(i,j,k)  \delta_{|a|<|j|} t_{i,-a}t_{ka} = 0  
\end{split} 
\end{equation}
Let us  assume that $|l|=|j|=1$. Then the previous equation reduces to \begin{equation*}
(-1)^{(p(i)+p(j))(p(k)+p(-j))}\big( \delta_{j>0}q + \delta_{j<0}q^{-1} \big)  t_{ij}t_{k,-j}  + \theta(i,j,k) \delta_{j>0} \eps t_{i,-j}t_{kj} = t_{k,-j}t_{ij} 
\end{equation*}
Replacing $j$ by $-j$ leads to the equation 
\begin{equation*}
(-1)^{(p(i)+p(-j))(p(k)+p(j))}\big( \delta_{j<0}q + \delta_{j>0}q^{-1} \big)  t_{i,-j}t_{kj}  + \theta(i,-j,k) \delta_{j<0} \eps t_{ij}t_{k,-j} = t_{kj}t_{i,-j} 
\end{equation*}
The monomials $t_{k,-j}t_{ij}$ and $t_{kj}t_{i,-j}$ are properly ordered and the previous two equations can be solved to express $t_{ij}t_{k,-j}$ and $ t_{i,-j}t_{kj}$ in terms of the former.

We then proceed by descending induction on $|j|$ and show that $t_{ij}t_{k,-j}$ can be expressed as a linear combination of properly ordered monomials. The base case $|j|=1$ was completed above. We use again (\ref{eq-subcase-iii}) and the corresponding equation obtained after switching $j$ and $-j$. In these two equations, by induction, the monomials $t_{i,-a}t_{ka}$ with $|a|<|j|$ can be expressed as linear combinations of properly ordered monomials. Moreover, $t_{k,-j}t_{ij}$ and $t_{kj}t_{i,-j}$ are already correctly ordered. As in the case $|l|=|j|=1$, we can then solve those two equations to express $t_{ij}t_{k,-j}$ and $ t_{i,-j}t_{kj}$ in terms of properly ordered monomials.

It remains to show that the reduced monomials form a linearly independent set. We follow the approach in \cite{Ol}. Let $M_1,\hdots, M_r$ be pairwise distinct reduced monomials in the generators $\tau_{ij}$ such that $a_1M_1 + \hdots + a_rM_r = 0$ for some $a_1,\hdots,a_r\in \C_q$. Without loss of generality, we can assume that $a_i\in\mcA$. It is sufficient to prove that $a_1,...,a_r \in \mcA$ implies $a_1,...,a_r \in (q-1)\mcA$.

Recall that there is a surjective homomorphism $\theta: \mfU_{\mathcal A} \mfp_n \to \mfU \mfp_n$ More precisely, $\theta$ is the composite of $\psi^{-1}$ from Theorem \ref{defprop} and the projection $\mfU_{\mathcal A} \mfp_n\to \mfU_{\mathcal A}\mfp_n/(q-1)\mfU_{\mathcal A}\mfp_n$ from Theorem \ref{defprop}. Let $\overline{M}_i = \theta(M_i)$ and denote by $\bar{a}_i$ the image of $a_i$ in $\mcA/(q-1)\mcA$. Since $M_1,\hdots, M_r$ are pairwise distinct reduced monomials, $\overline{M}_1,\hdots, \overline{M}_r$ are pairwise distinct monomials in $\mfU \mfp_n$. Then using that 
$$
\bar{a}_1 \overline{M}_1 + \ldots + \bar{a}_r \overline{M}_r  = \theta  (a_1M_1 + \hdots + a_rM_r ) = 0
$$
and the (classical) PBW Theorem for $\mfU \mfp_n$, we obtain $\bar{a}_1 = \hdots = \bar{a}_r = 0$. Hence $a_1,\hdots,a_r\in (q-1)\mcA$ as needed.
\end{proof}

As mentioned in Remark \ref{q-to-h}, we may replace $\C(q)$ by $\C((\hbar))$, $q$ by $e^{\hbar/2}$, and $\mcA$ by $\C[[\hbar]]$, and an analog of Theorem \ref{defprop} would hold true, implying that $\mfU_{\C[[\hbar]]} \mfp_n$ is a flat deformation of $\mfU\mfp_n$. Moreover, the next theorem states that $\mfU_{\C[[\hbar]]} \mfp_n$ is a quantization of the co-Poisson Hopf superalgebra structure on $\mfU\mfp_n$ induced by the Lie bisuperalgebra structure defined in Section \ref{bisup}. To be precise, the cobracket $\delta$ on $\mfp_n$ extends to a Poisson co-bracket on $\mfU\mfp_n$, which we also denote by $\delta$. Let $(\cdot)^{\circ}$ be the involution on $(\mfU_{\C[[\hbar]]}\mfp_n)^{\ot 2}$ given by $A_1 \ot A_2 \mapsto (-1)^{p(A_1)p(A_2)} A_2 \ot A_1$ where $p(A_i)$ is the $\Z/2\Z$-degree of $A_i,i=1,2$. 

For convenience, for $A \in \mfU_{\C[[\hbar]]}\mfp_n$, we denote by  $\ol{A}$ both the image of $A$ in $\mfU_{\C[[\hbar]]}\mfp_n/h\mfU_{\C[[\hbar]]}\mfp_n$ and the corresponding element in $\mfU\mfp_n$ via the isomorphism of the $\hbar$-analogue of Theorem \ref{defprop}. Similarly, we identify the corresponding elements in $\left( \mfU_{\C[[\hbar]]}\mfp_n/h\mfU_{\C[[\hbar]]}\mfp_n \right) \otimes \left( \mfU_{\C[[\hbar]]}\mfp_n/h\mfU_{\C[[\hbar]]}\mfp_n \right) $ and $\mfU\mfp_n \otimes \mfU\mfp_n$. 

\begin{theorem} \label{thm-lim-cobr}
If $A \in \mfU_{\C[[\hbar]]}\mfp_n$,  we have  $\ol{\hbar^{-1} (\Delta(A) - \Delta(A)^{\circ})} = \delta(\ol{A})$. Hence, $\mfU_{\C[[\hbar]]} \mfp_n$ is a quantization of the co-Poisson Hopf superalgebra structure on $\mfU\mfp_n$.
\end{theorem}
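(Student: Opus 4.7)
The plan is to establish the identity $\ol{\hbar^{-1}(\Delta(A) - \Delta(A)^{\circ})} = \delta(\ol A)$ first on the algebra generators $\tau_{ij}$ of $\mfU_{\C[[\hbar]]}\mfp_n$, and then to extend it to the whole algebra by a super-Leibniz argument. For the reduction, set $D(X) := \ol{\hbar^{-1}(\Delta(X) - \Delta(X)^{\circ})}$. Since $\Delta$ is a superalgebra homomorphism, and the super-swap is itself a superalgebra homomorphism on the graded tensor square, $\Delta^{\circ}$ is also a superalgebra homomorphism, whence $\Delta(AB) - \Delta(AB)^{\circ} = (\Delta(A) - \Delta(A)^{\circ})\,\Delta(B) + \Delta(A)^{\circ}\,(\Delta(B) - \Delta(B)^{\circ})$. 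Dividing by $\hbar$ and reducing modulo $\hbar$, using that $\Delta(A)$ and $\Delta(A)^{\circ}$ both reduce to the cocommutative Hopf coproduct $\Delta_0(\ol A)$ on $\mfU\mfp_n$, yields $D(AB) = D(A)\,\Delta_0(\ol B) + \Delta_0(\ol A)\, D(B)$. This is precisely the super-Leibniz rule satisfied by the co-Poisson cobracket $\delta$ on $\mfU\mfp_n$ extended from $\mfp_n$. Since both $\delta$ and $D$ are determined by their values on a generating set together with this Leibniz rule, it suffices to verify the identity on each $\tau_{ij}$.

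For the generator computation, recall the identifications (from the $\hbar$-analogue of Theorem \ref{defprop}) $\psi^{-1}(\ol\tau_{ab}) = (-1)^{p(b)}\msE_{ba}$ for $|a|<|b|$, $\psi^{-1}(\ol\tau_{ii}) = \msE_{ii}$, and $\psi^{-1}(\ol\tau_{i,-i}) = -\tfrac12\msE_{-i,i}$. A short calculation using $p(t_{ik}) = p(i)+p(k)$ shows that the super-swap signs cancel the coefficient signs in $\Delta$, giving $\Delta(t_{ij})^{\circ} = \sum_{k} t_{kj}\otimes t_{ik}$, so the difference reads $\Delta(t_{ij}) - \Delta(t_{ij})^{\circ} = \sum_{k}\bigl[(-1)^{(p(i)+p(k))(p(k)+p(j))} t_{ik}\otimes t_{kj} - t_{kj}\otimes t_{ik}\bigr]$. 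Substituting $t_{ab} = (q-q^{-1})\tau_{ab}$ for $a\neq b$ and $t_{ii} = 1 + (q-1)\tau_{ii}$, and expanding with $q-q^{-1} = \hbar + O(\hbar^3)$ and $q-1 = \hbar/2 + O(\hbar^2)$, one observes that the $\hbar^1$-contributions from the boundary terms $k \in \{i,j\}$ cancel (reflecting the cocommutativity of $\Delta_0$), and the $\hbar^2$-coefficient splits into three pieces: the middle-index sum over $|i|<|k|<|j|$; the Cartan corrections from $k \in \{i,j\}$ involving $\tau_{ii}$ or $\tau_{jj}$; and the anti-diagonal corrections from $k \in \{-i,-j\}$ (when not killed by $t_{-b,b}=0$, $b>0$). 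Upon dividing by $\hbar(q-q^{-1}) = \hbar^2 + O(\hbar^4)$, reducing modulo $\hbar$, and passing through $\psi^{-1}$, these three pieces match, respectively, the intermediate summation on the first line of (\ref{cobr}), the Cartan terms on the second line, and the two anti-diagonal terms on the third and fourth lines. The generators $\tau_{ii}$ and $\tau_{i,-i}$ are handled by strictly simpler variants of the same expansion.

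Combined with the $\hbar$-analogue of Theorem \ref{defprop}, which provides flatness of the deformation and the isomorphism $\mfU_{\C[[\hbar]]}\mfp_n/\hbar\mfU_{\C[[\hbar]]}\mfp_n \cong \mfU\mfp_n$, the identity $D = \delta$ then elevates to the statement that $\mfU_{\C[[\hbar]]}\mfp_n$ is a quantization of the co-Poisson Hopf superalgebra structure on $\mfU\mfp_n$ from Section \ref{bisup}. The main obstacle I expect is the sign bookkeeping in the generator step: one must carefully track the interplay between the super-signs $(-1)^{(p(i)+p(k))(p(k)+p(j))}$ coming from $\Delta$, the signs $(-1)^{p(b)}$ entering through $\psi^{-1}$, and the signs $(-1)^{p(k)+1}$ and $(-1)^{(p(i)+p(k))(p(j)+p(k))}$ appearing in (\ref{cobr}), across all sign patterns of $(\sgn i, \sgn j, \sgn k)$. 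This finite but lengthy case analysis is precisely the kind of computation whose details are deferred to \cite{AGG}.
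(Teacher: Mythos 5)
Your proposal is correct and follows essentially the same route as the paper: verify the identity on the generators $\tau_{ij}$ by expanding $\Delta(t_{ij})-\Delta(t_{ij})^{\circ}$ in powers of $\hbar$, match the result against \eqref{cobr} through the isomorphism of Theorem \ref{defprop} (the paper notes the diagonal case is trivially zero), and extend to all of $\mfU_{\C[[\hbar]]}\mfp_n$ by the co-Leibniz rule. The only difference is that you spell out the co-Leibniz reduction explicitly, which the paper leaves implicit; your generator identifications, sign cancellation in $\Delta^{\circ}$, and $\hbar$-bookkeeping all agree with the paper's computation.
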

\begin{proof}

We show that the identity above holds for the generators $\tau_{ij}$ of $\mathfrak{U}_{\mathbb{C}[[\hbar]]}\mathfrak{g}$, so let $A=\tau_{ij}$.  We first note that the identity is trivially satisfied for $i=j$, as both sides are zero. Assume henceforth that $i\neq j$. Then: 
\begin{align*}
\hbar^{-1}\left(\Delta(\tau_{ij})-\Delta(\tau_{ij})^\circ\right) = {} & \left(\frac{e^{\hbar/2}-e^{-\hbar/2}}{\hbar}\right) \sum_{\substack{k=-n \\ \vert i \vert < \vert k \vert < \vert j \vert}}^n\left((-1)^{(p(i)+p(k))(p(j)+p(k))}\tau_{ik}\otimes \tau_{kj} - \tau_{kj}\otimes \tau_{ik} \right)\\ 
&  + \left(\frac{e^{\hbar/2}-1}{\hbar}\right) \left(\tau_{ii}\otimes \tau_{ij} - \tau_{ij}\otimes \tau_{ii} + \tau_{ij}\otimes \tau_{jj} - \tau_{jj}\otimes \tau_{ij} \right)\\ 
&  -  \left(\frac{e^{\hbar/2}-e^{-\hbar/2}}{\hbar}\right) \delta_{i>0}\left( (-1)^{p(j)}\tau_{i,-i}\otimes \tau_{-i,j} + \tau_{-i,j}\otimes \tau_{i,-i}   \right) \\ 
&  + \left(\frac{e^{\hbar/2}-e^{-\hbar/2}}{\hbar}\right)\delta_{j<0}\left( (-1)^{p(i)}\tau_{i,-j}\otimes \tau_{-j,j} - \tau_{-j,j}\otimes \tau_{i,-j} \right)
\end{align*}

Thus, in $\mathfrak{U}_{\mathbb{C}[[\hbar]]}\mathfrak{g}/\hbar\mathfrak{U}_{\mathbb{C}[[\hbar]]}\mathfrak{g}$, we have:
\begin{align*}
\overline{\hbar^{-1}\left(\Delta(\tau_{ij})-\Delta(\tau_{ij})^\circ\right)} = {} & \sum_{\substack{k=-n \\ \vert i \vert < \vert k \vert < \vert j \vert}}^n\left( (-1)^{(p(i)+p(k))(p(j)+p(k))}\overline{\tau}_{ik}\otimes \overline{\tau}_{kj} - \overline{\tau}_{kj}\otimes \overline{\tau}_{ik} \right)\\ 
&  + \frac{1}{2}\left( \overline{\tau}_{ii}\otimes \overline{\tau}_{ij} - \overline{\tau}_{ij}\otimes \overline{\tau}_{ii}  + \overline{\tau}_{ij}\otimes \overline{\tau}_{jj} - \overline{\tau}_{jj}\otimes \overline{\tau}_{ij}  \right)\\ 
&  - \delta_{i>0}\left(\overline{\tau}_{-i,j}\otimes \overline{\tau}_{i,-i} + (-1)^{p(j)}\overline{\tau}_{i,-i}\otimes \overline{\tau}_{-i,j} \right)\\ 
&  + \delta_{j<0}\left( (-1)^{p(i)}\overline{\tau}_{i,-j}\otimes \overline{\tau}_{-j,j} - \overline{\tau}_{-j,j}\otimes \overline{\tau}_{i,-j} \right)
\end{align*}
We next compute  $\delta(\overline{\tau}_{ij})$ using the isomorphism of Theorem \ref{defprop} and \eqref{cobr}. 
\begin{align*}
\delta(\overline{\tau}_{ij}) = {} & (-1)^{p(j)}\delta(\msE_{ji})\\
= {} & \sum_{\substack{k=-n \\ \vert i \vert < \vert k \vert < \vert j \vert}}^n(-1)^{p(j)+p(k)}\left ((-1)^{(p(i)+p(k))(p(j)+p(k))}\msE_{ki}\otimes \msE_{jk} - \msE_{jk}\otimes \msE_{ki} \right) \\
& {} - \frac{1}{2}(-1)^{p(j)}\left((-1)^{p(j)}\msE_{jj} - (-1)^{p(i)}\msE_{ii} \right)\otimes \msE_{ji} + \frac{1}{2}(-1)^{p(j)}\msE_{ji}\otimes \left((-1)^{p(j)}\msE_{jj} - (-1)^{p(i)}\msE_{ii} \right)\\
& {} - \frac{\delta_{j<0}}{2}(-1)^{p(j)}\msE_{j,-j}\otimes \msE_{-j,i} + \frac{\delta_{i>0}}{2}\msE_{-i,i}\otimes \msE_{j,-i}\\
& {} + \frac{\delta_{j<0}}{2}(-1)^{p(i)+p(j)}\msE_{-j,i}\otimes \msE_{j,-j} + \frac{\delta_{i>0}}{2}(-1)^{p(j)}\msE_{j,-i}\otimes \msE_{-i,i}\\
= {} & \overline{\hbar^{-1}\left(\Delta(\tau_{ij})-\Delta(\tau_{ij})^\circ \right)} 
\end{align*}
as needed. \end{proof} 

\section{Periplectic $q$-Brauer algebra}

In \cite{Mo}, D. Moon identified the centralizer of the action of $\mfp_n$ on the tensor space $\C_{n|n}^{\ot l}$. This centralizer is called the periplectic Brauer algebra in the literature: see \cite{Co,CP,CE1,CE2}.

Since $S$ is a solution of the quantum Yang-Baxter equation, we have a representation of $\mfU_q\mfp_n$ on $\C_q(n|n)$ via the assignment $t_{ij}\mapsto s_{ij}$ (where $S = \sum_{i,j=-n}^n s_{ij} \otimes E_{ij}$), and thus we also have a representation on each tensor power $\C_q(n|n)^{\ot l}$. In this section, we identify the centralizer of the action of $\mfU_q\mfp_n$ on $\C_q(n|n)^{\ot l}$ and call it the periplectic $q$-Brauer algebra. For the quantum group of type $Q$, this was done in \cite{Ol} and the centralizer of its action is called the Hecke-Clifford superalgebra.  Quantum analogs of the Brauer algebra were studied in \cite{M} where they appear as centralizers of the action of twisted quantized enveloping algebras $\mfU_q^{tw}\mfo_n$ and $\mfU_q^{tw}\mfsp_n$ on tensor representations (here, $\mfsp_n$ is the symplectic Lie algebra); see also \cite{We}. 

\begin{definition} The periplectic $q$-Brauer algebra $\mfB_{q,l}$ is the associative $\C(q)$-algebra generated by elements $\mst_i$ and $\msc_i$ for $1\le i\le l-1$ satisfying the following relations: \begin{gather}
(\mst_i-q)(\mst_i+q^{-1}) = 0, \;\;  \msc_i^2=0, \;\; \msc_i \mst_i = -q^{-1} \msc_i, \;\; \mst_i \msc_i = q \msc_i \;\; \text{ for } 1\le i \le l-1; \label{qB1} \\
\mst_i \mst_j = \mst_j \mst_i, \;\; \mst_i \msc_j = \msc_j \mst_i, \;\; \msc_i \msc_j = \msc_j \msc_i \;\; \text{ if } |i-j|\ge 2; \label{qB2} \\
\mst_i \mst_j \mst_i = \mst_j \mst_i \mst_j, \;\; \msc_{i+1} \msc_i \msc_{i+1} = -\msc_{i+1}, \;\; \msc_i \msc_{i+1} \msc_i = - \msc_i \;\; \text{ for } 1\le i \le l-2; \label{qB3} \\
\mst_i \msc_{i+1} \msc_i = -\mst_{i+1} \msc_i + (q-q^{-1}) \msc_{i+1} \msc_i, \;\;  \msc_{i+1} \msc_i \mst_{i+1}  = -\msc_{i+1} \mst_i + (q-q^{-1}) \msc_{i+1} \msc_i \label{qB4}
\end{gather}
\end{definition}
\begin{remark}
Setting $q=1$ in this definition yields the algebra $A_l$ from Definition 2.2 in \cite{Mo}.
\end{remark}

\begin{lemma}\label{lemma:varthetaepsilon}
Consider $\C(q)$ as purely odd $\mfU_q\mfp_n$-module. We have $\mfU_q\mfp_n$-module homomorphisms $\vartheta: \C_q(n|n)\ot\C_q(n|n) \rightarrow \C(q)$ and $\epsilon: \C(q) \rightarrow \C_q(n|n)\ot\C_q(n|n)$ given by $\vartheta(e_a \ot e_b) = \delta_{a,-b}(-1)^{p(a)}$ and $\epsilon(1) = \sum_{a=-n}^n e_a \ot e_{-a}$.
\end{lemma}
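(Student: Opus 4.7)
My plan is to verify both claims by direct computation on the generators of $\mfU_q\mfp_n$. Denote by $\rho: \mfU_q\mfp_n \to \End_{\C_q}(\C_q(n|n))$ the representation given by the assignment $T\mapsto S$, i.e., $(\rho\otimes\id)(T)=S$. Since $\C(q)$ carries the trivial module structure, with each $t_{ij}$ acting by the counit value $\delta_{ij}$, the map $\vartheta$ is a $\mfU_q\mfp_n$-module homomorphism if and only if
\[
\vartheta\bigl((\rho\otimes\rho)\Delta(t_{ij})(e_a\otimes e_b)\bigr) = \delta_{ij}\,\vartheta(e_a\otimes e_b)
\]
for all generators $t_{ij}$ and all basis vectors $e_a, e_b$, and similarly $\epsilon$ is a module homomorphism if and only if $(\rho\otimes\rho)\Delta(t_{ij})\,\epsilon(1) = \delta_{ij}\,\epsilon(1)$ for all $t_{ij}$.

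The steps are as follows. First, extract the action of each generator on $\C_q(n|n)$ from the formula for $S$ in \eqref{Smat}: $\rho(t_{ij})$ is precisely the $\End_{\C_q}(\C_q(n|n))$-coefficient of $E_{ij}$ on the second tensor factor of $S$. This yields explicit and rather sparse matrices for the diagonal generators $t_{ii}$, for the ``off-diagonal anti-identity'' generators $t_{-i,i}$ with $i<0$, and for the strictly off-diagonal generators $t_{ij}$ with $|i|<|j|$. Second, using
\[
\Delta(t_{ij}) = \sum_{k=-n}^{n}(-1)^{(p(i)+p(k))(p(k)+p(j))}\,t_{ik}\otimes t_{kj}
\]
together with the supertensor rule $(u\otimes v)(x\otimes y) = (-1)^{|v||x|}\,u(x)\otimes v(y)$, expand the action of $\Delta(t_{ij})$ on $e_a\otimes e_b$ as a signed sum over $k$. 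Third, apply $\vartheta$ to the result; since $\vartheta$ is supported on pairs of opposite indices, the pairing annihilates every term except those whose two outputs have opposite labels. A case-by-case check shows that the surviving contributions collapse to $\delta_{ij}\,\vartheta(e_a\otimes e_b)$. Finally, the computation for $\epsilon$ is entirely parallel: one precomposes $(\rho\otimes\rho)\Delta(t_{ij})$ with $\epsilon$ and checks, again case-by-case, that the output equals $\delta_{ij}\,\epsilon(1)$, the sparseness of $\epsilon(1)=\sum_c e_c\otimes e_{-c}$ playing the same role that the support condition on $\vartheta$ played in the first computation.

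The principal obstacle is combinatorial bookkeeping rather than any conceptual subtlety: one must organize the case analysis according to whether $|i|<|j|$, $|i|=|j|$, or $i=-j$, as well as on the signs of $i,j,a,b$, and track several layers of Koszul signs coming from the coproduct, the supertensor rule, and the odd parities of $\vartheta$ and of $\epsilon$. The main simplifications come from the symmetry $\msE_{ij} = -(-1)^{p(i)(p(j)+1)}\msE_{-j,-i}$, which collapses redundant cases by relating the matrix units that appear in $S$, and from the sparse support of $\vartheta$ and $\epsilon(1)$, which forces the vast majority of terms in the expansion of $\Delta(t_{ij})$ to drop out. Complete details of this verification are recorded in \cite{AGG}.
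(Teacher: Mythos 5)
Your proposal is correct and follows essentially the same route as the paper: extract the action of each generator $t_{ij}$ on $\C_q(n|n)$ from the formula \eqref{Smat} for $S$ via $T\mapsto S$, act on $\C_q(n|n)^{\ot 2}$ through the coproduct with the super-tensor sign rule, and verify $\vartheta(t_{ij}(\mbv))=t_{ij}(\vartheta(\mbv))$ and $\epsilon(t_{ij}(1))=t_{ij}(\epsilon(1))$ generator by generator in a case-by-case analysis (with $\C(q)$ the trivial module via the counit), deferring the full bookkeeping to \cite{AGG}, exactly as the paper's sketch does.
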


\begin{proof} It is enough to check that, for all the generators $t_{ij}$ of $\mfU_q\mfp_n$ and any tensor $\mbv \in \C_q(n|n)\ot\C_q(n|n)$, \begin{equation} \vartheta(t_{ij}(\mbv)) = t_{ij}(\vartheta(\mbv))\text{ and } \epsilon(t_{ij}(1)) = t_{ij}(\epsilon(1)). \label{eq:varthetaepsilon} \end{equation} Here is a brief sketch of some of the computations.

Using the formula for the coproduct, we have: \begin{align}
t_{ij}(e_a\ot e_{-a}) = {} &  \sum_{k=-n}^n (-1)^{(p(i)+p(k))(p(k)+p(j)) + (p(k)+p(j))p(a)} t_{ik}(e_a) \ot t_{kj}(e_{-a}) \label{eq:varthetaepsilonproduct}
\end{align}
This can be made more explicit using 
\begin{eqnarray*}
t_{ii}(e_a) &=& \sum_{b=-n}^n q^{\delta_{bi}(1-2p(i)) + \delta_{b,-i}(2p(i)-1)} E_{bb}(e_a); \\
t_{i,-i}(e_a) &=& (q-q^{-1}) \delta_{i>0} E_{-i,i}(e_a); \\
t_{ij}(e_a) & = &  (q-q^{-1}) (-1)^{p(i)} \msE_{ji}(e_a), \text{ if  }|i|\neq |j|.
\end{eqnarray*}
We obtain, for instance, \begin{align*}
t_{ii}(e_{a_1}\ot e_{a_2}) = {} & q^{\delta_{a_{1},i}(1-2p(i)) + \delta_{a_{1},-i}(2p(i)-1)} q^{\delta_{a_2,i}(1-2p(i)) + \delta_{a_2,-i}(2p(i)-1)} e_{a_1} \ot  e_{a_2}
\end{align*}
If $a_2 = -a_1=-a$, this simplifies to $e_{a} \ot e_{-a}$ and this allows us to check \eqref{eq:varthetaepsilon} quickly for $i=j$.

Furthermore, \begin{align*}
t_{i,-i}(e_{a}\ot e_{-a}) = {} & (-1)^{p(a)} \delta_{i>0} t_{ii}(e_a) \ot t_{i,-i}(e_{-a}) + \delta_{i>0} t_{i,-i}(e_a) \ot t_{-i,-i}(e_{-a}) 
\end{align*}
It follows that $t_{i,-i}\left(\sum_{a=-n}^n e_{a}\ot e_{-a}\right) = 0$, so the identity for  $\epsilon$  in\eqref{eq:varthetaepsilon} holds  for $j=-i$.

Suppose now that $a_1\neq -a_2$. Then
\begin{align*}
t_{i,-i}(e_{a_1}\ot e_{a_2}) = {} & \delta_{i>0} \delta(a_1=a_2=i) (q-q^{-1}) q e_{i} \ot e_{-i} \\
{} & + \delta_{i>0} \delta(a_1=a_2=i) (q-q^{-1}) q  e_{-i} \ot e_{i} \\
\end{align*}
Observe that $\vartheta(e_{i} \ot e_{-i} + e_{-i} \ot e_{i}) = 0$, so we have shown that $\vartheta(t_{i,-i}(e_{a_1}\ot e_{a_2})) = t_{i,-i}(\vartheta(e_{a_1}\ot e_{a_2}))$ and this proves \eqref{eq:varthetaepsilon} for $\vartheta$ when $j=-i$.

Next, we consider the case  $|i| \neq |j|$. To prove the identity  for $\epsilon$ in  \eqref{eq:varthetaepsilon}, we use again  \eqref{eq:varthetaepsilonproduct} and obtain that $\displaystyle t_{ij}\left(\sum_{a=-n}^n e_{a}\ot e_{-a}\right) = 0$ by considering subcases $i=\pm a$, $j=\pm a$, and $k=\pm a$. To show that \eqref{eq:varthetaepsilon} holds for $\vartheta$ we also proceed with case-by-case verification. The case   $a_1,a_2\not\in \{\pm i, \pm j\}$ is immediate.  If $a_1\in \{\pm i, \pm j\}$,  $a_2 \not\in \{\pm i, \pm j\}$, and$a_1\neq -a_2$, then
\begin{align*}
t_{ij}(e_{a_1}\ot e_{a_2}) = {} &  (q-q^{-1})^2 (-1)^{(p(i)+p(a_2))(p(a_2)+p(j)) + (p(a_2)+p(j))p(a_1)} (-1)^{p(i)+p(a_2)} \msE_{a_2i}(e_{a_1}) \ot \msE_{ja_2}(e_{a_2}) \\
& + (q-q^{-1}) (-1)^{p(i)} \msE_{ji}(e_{a_1}) \ot  E_{a_2a_2}(e_{a_2}).
\end{align*}
 This shows that $\vartheta(t_{ij}(e_{a_1} \otimes e_{a_2}))=0 = t_{ij}(\vartheta(e_{a_1} \otimes e_{a_2}))$. Similarly, we obtain the desired identity in the other cases. \end{proof}

By composing $\vartheta$ and $\epsilon$, we obtain a $\mfU_q\mfp_n$-module homomorphism $\epsilon\circ\vartheta:  \C_q(n|n)^{\ot 2} \rightarrow \C_q(n|n)^{\ot 2}$. In terms of elementary matrices, this linear map is given by $\sum_{a,b=-n}^n (-1)^{p(a)p(b)} E_{ab} \otimes E_{-a,-b}$, which we abbreviate by $\mfc$. The super-permutation operator $P$ on $ \C_q(n|n)^{\ot 2}$ is given by $P = \sum_{a,b=-n}^n (-1)^{p(b)}E_{ab} \ot E_{ba}$, so $\mfc = P^{(\pi\circ\mathrm{st})_2}$ where $(\pi\circ\mathrm{st})_2$ stands for the map $\pi\circ\mathrm{st}$ applied to the second tensor in the previous formula for $P$.

We can extend $\mfc$ to a $\mfU_q\mfp_n$-module homomorphism $\mfc_i:  \C_q(n|n)^{\ot l} \rightarrow \C_q(n|n)^{\ot l}$ for $1\le i\le l-1$ by applying $\mfc$ to the $i^{th}$ and $(i+1)^{th}$ tensors. 

The linear map $\C_q(n|n)^{\ot l} \rightarrow \C_q(n|n)^{\ot l}$ given by $P_i S_{i,i+1}$ where $P_i$ is the super-permutation operator acting on the $i^{th}$ and $(i+1)^{th}$ tensors is also a $\mfU_q\mfp_n$-module homomorphism: this is a consequence of the fact that $S$ is a solution of the quantum Yang-Baxter relation. 

\begin{proposition}
The tensor superspace $\C_q(n|n)^{\ot l}$ is a module over $\mfB_{q,l}$ if we let $\mst_i$ act as $P_iS_{i,i+1}$ and $\msc_i$ act as $\mfc_i$.
\end{proposition}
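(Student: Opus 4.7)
The plan is to verify, as endomorphism identities on $\C_q(n|n)^{\otimes l}$, each of the defining relations \eqref{qB1}--\eqref{qB4} of $\mfB_{q,l}$ for the operators $P_iS_{i,i+1}$ and $\mfc_i$. Both families already take values in $\End_{\mfU_q\mfp_n}(\C_q(n|n)^{\otimes l})$: for $P_iS_{i,i+1}$ this is the remark preceding the proposition, and for $\mfc_i = \epsilon_{i,i+1}\circ\vartheta_{i,i+1}$ it follows from Lemma \ref{lemma:varthetaepsilon} since $\mfc$ is a composition of $\mfU_q\mfp_n$-module maps. So the task is purely computational.

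Several relations come from general principles. The far-commutation relations \eqref{qB2} are immediate because the operators act on disjoint tensor factors when $|i-j|\ge 2$. The braid relation $\mst_i\mst_{i+1}\mst_i = \mst_{i+1}\mst_i\mst_{i+1}$ is the standard reformulation of the QYBE (Theorem \ref{QYBE}) for $\check{R}_{i,i+1} := P_iS_{i,i+1}$: using $P_{23}S_{12}P_{23} = S_{13}$ together with the braid identity $P_{12}P_{23}P_{12} = P_{23}P_{12}P_{23}$ for super-permutations, both sides of $\check{R}_{12}\check{R}_{23}\check{R}_{12} = \check{R}_{23}\check{R}_{12}\check{R}_{23}$ can be rewritten as $P_{23}P_{12}P_{23}(\cdot)$, and the two remaining factors are exactly $S_{23}S_{13}S_{12}$ and $S_{12}S_{13}S_{23}$, which agree by the QYBE. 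The nilpotency $\msc_i^2 = 0$ reduces to $\vartheta\circ\epsilon = 0$, i.e.\ $\sum_{a=-n}^n (-1)^{p(a)}=0$, which holds because the basis has equally many positive and negative indices.

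The remaining relations are verified by direct computation. The Hecke-type identity, equivalent to $(PS)^2 = (q-q^{-1})PS + 1$, is obtained by expanding $(PS)^2$ using \eqref{Smat}. The identities $\mst_i\msc_i = q\msc_i$ and $\msc_i\mst_i = -q^{-1}\msc_i$ translate into $PS\cdot\epsilon(1) = q\,\epsilon(1)$ and $\vartheta\cdot PS = -q^{-1}\vartheta$, each of which is checked on the decomposition $S = 1 + (q-q^{-1})\mss + \bigl(\tfrac{q+q^{-1}}{2}-1\bigr)C$ from the proof of Theorem \ref{QYBE}; the point is that, once one carefully tracks super-composition signs, the off-diagonal pieces of $\mss$ evaluated on $\epsilon(1)$ cancel in pairs (the contributions for $a>0$ and $a<0$ differ exactly by a sign), leaving only $\tfrac{1}{2}\epsilon(1)$, which combines with $C\epsilon(1) = \epsilon(1)$ to yield $S\epsilon(1) = q\epsilon(1)$. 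The snake relations $\msc_{i+1}\msc_i\msc_{i+1} = -\msc_{i+1}$ and $\msc_i\msc_{i+1}\msc_i = -\msc_i$ reduce to the partial snake identities $(\vartheta\otimes\id)(\id\otimes\epsilon)(e_a) = (-1)^{p(a)}e_a$ and $(\id\otimes\vartheta)(\epsilon\otimes\id)(e_a) = -(-1)^{p(a)}e_a$, whose asymmetric signs produce the required $-1$.

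The main obstacle will be the pair of relations \eqref{qB4}, which couple $PS$ with two adjacent $\msc$'s. I would evaluate both sides of $P_iS_{i,i+1}\mfc_{i+1}\mfc_i = -P_{i+1}S_{i+1,i+2}\mfc_i + (q-q^{-1})\mfc_{i+1}\mfc_i$ on a basis tensor $e_a\otimes e_b\otimes e_c$ in positions $i,i+1,i+2$. The composition $\mfc_{i+1}\mfc_i$ on $e_a\otimes e_b\otimes e_c$ equals a scalar (produced by the snake contractions) times a tensor of the restricted shape $\sum_d e_d\otimes e_{-d}\otimes e_\ast$, after which the action of $PS_{i,i+1}$ can be read off term by term from \eqref{Smat}. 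On the right side, the first summand is handled by the partial snake identity just used, while the extra term $(q-q^{-1})\mfc_{i+1}\mfc_i$ matches precisely the $(q-q^{-1})\mss$-contribution of $S$ evaluated on tensors of the said restricted shape. A symmetric computation settles the second identity in \eqref{qB4}.
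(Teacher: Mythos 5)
Your overall strategy is the paper's: reduce everything to the defining relations, get the braid relation from Theorem \ref{QYBE}, dismiss \eqref{qB2} as acting on disjoint factors, and verify the remaining relations by direct computation on two or three tensor factors using the explicit form of $S$. Your bookkeeping ($\vartheta\circ\epsilon=0$ for $\msc_i^2=0$, the eigenvector computation $S\epsilon(1)=q\epsilon(1)$ via $\mss\epsilon(1)=\tfrac12\epsilon(1)$ and $C\epsilon(1)=\epsilon(1)$, $\vartheta\circ PS=-q^{-1}\vartheta$, and the partial snake identities) is equivalent in content to the paper's operator identities $P\mfc=\mfc$, $\mfc P=-\mfc$, $(S-1)\mfc=(q-1)\mfc$, $\mfc(S-1)=(q^{-1}-1)\mfc$ and its explicit formula for $PS$, so this is essentially the same proof, correctly outlined.

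One concrete slip in your sketch of \eqref{qB4}: the image of $\mfc_{i+1}\mfc_i$ (with $\mfc_i$ applied first) is spanned by vectors of the form $e_c\ot\sum_{d}e_d\ot e_{-d}$ in positions $i,i+1,i+2$; the shape $\sum_d e_d\ot e_{-d}\ot e_\ast$ you state is the image of $\mfc_i$ alone. As written, your plan would have $P_iS_{i,i+1}$ acting on a cup sitting in positions $i,i+1$, which just multiplies by $q$ and makes the left-hand side $q\,\mfc_{i+1}\mfc_i$ -- that does not match the right-hand side, so the verification as described would fail. The actual computation has $P_iS_{i,i+1}$ acting across the boundary of the cup in positions $i+1,i+2$ and genuinely requires expanding $PS(e_c\ot e_d)$ term by term, exactly as in the paper's displayed formula for $PS$. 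A clean fix, possibly what you intended: since every term of the first relation in \eqref{qB4} has $\msc_i$ as its rightmost factor, it suffices to check $\mst_i\msc_{i+1}=-\mst_{i+1}+(q-q^{-1})\msc_{i+1}$ on vectors of the restricted shape $\sum_d e_d\ot e_{-d}\ot e_c$ (and dually, compose with $\vartheta$ in positions $i+1,i+2$ for the second relation). With that correction the method goes through.
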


\begin{proof} That the linear operators $P_i S_{i,i+1}$ satisfy the braid relation (the first relation in \eqref{qB3}) is a consequence of the fact that $S$ is a solution of the quantum Yang-Baxter relation. The relations \eqref{qB2} for the operators $P_i S_{i,i+1}$ and $\mfc_i$ can be easily verified.  As for the other relations, they can be checked via direct computations. It is enough to check the relations \eqref{qB1} on $ \C_q(n|n)^{\ot 2}$ and the relations \eqref{qB4} on $ \C_q(n|n)^{\ot 3}$. We briefly sketch some of those computations below. 

First, note that $\mfc P =  - \mfc$ and $P \mfc = \mfc$. Also, we easily obtain the following:
\begin{align*}
\mfc \left((q-1)\sum_{i=1}^n E_{ii} \otimes E_{ii}\right) = {} & \mfc \left((q^{-1}-1)\sum_{i=1}^n E_{-i,-i} \otimes E_{-i,-i}\right) = 0,\\
\mfc \left((q-1)\sum_{i=1}^n E_{ii} \otimes E_{-i,-i}\right) = {} & (q-1) \sum_{a=-n}^n \sum_{b=1}^n E_{ab} \otimes E_{-a,-b},\\
\mfc  \left((q^{-1}-1)\sum_{i=1}^n E_{-i,-i} \otimes E_{ii}\right) = {} & (q^{-1}-1) \sum_{a=-n}^n \sum_{b=-n}^{-1} (-1)^{p(a)} E_{ab} \otimes E_{-a,-b},\\
\mfc  \left( \sum_{i=-n}^{-1} E_{i,-i} \otimes E_{-i,i} \right) = {} & - \sum_{a=-n}^n \sum_{b=1}^{n}  E_{ab} \otimes E_{-a,-b},\\
\mfc  \left( \sum_{1\leq |j|<|i|\leq n} (-1)^{p(j)}\msE_{ij} \otimes E_{ji} \right) = {} & \sum_{a=-n}^n \sum_{1\leq |j|<|i|\leq n} (-1)^{p(a)(p(i)+1)+p(j)}  E_{a,-i} \otimes E_{-a,i} = 0.
\end{align*}

Therefore, we have that $\mfc(S-1)=(q^{-1}-1)\mfc$, hence $\mfc S = q^{-1}\mfc$. Now using that $\mfc = -\mfc P$, we obtain the third relation in \eqref{qB1}. Similarly, we prove  $(S-1)\mfc = (q-1)\mfc$, and then using $P\mfc = \mfc$, we obtain  the fourth relation in \eqref{qB1}.

For the remaining relations we use the following formula:
\begin{align*}
PS = {} &  \sum_{i,j=-n}^n (-1)^{p(j)} E_{ij} \ot E_{ji} 
 + (q-1) \sum_{i=1}^n \left(  E_{-i,i} \ot E_{i,-i} \right)  \\
& + (q-1) \sum_{i=1}^n \left(  E_{ii} \ot E_{ii} \right)  
 - (q^{-1}-1) \sum_{i=1}^n  \left(  E_{i,-i} \ot E_{-i,i} \right)  \\
& - (q^{-1}-1) \sum_{i=1}^n  \left(E_{-i,-i} \ot E_{-i,-i} \right)  
 + (q-q^{-1}) \sum_{i=-n}^{-1} \left(  E_{-i,-i} \ot E_{ii} \right)   \\
& + (q-q^{-1}) \sum_{|j|<|i|} \left(E_{jj} \ot E_{ii} \right) 
 + (q-q^{-1}) \sum_{|j|<|i|} \left( (-1)^{p(i)p(j)} E_{ji} \ot E_{-j,-i} \right) 
\end{align*}
 \end{proof}

As mentioned after the definition of $\mfB_{q,l}$, the module structure given in the previous proposition commutes with the action of $\mfU_q(\mfp_n)$ on $\C_q(n|n)^{\ot l}$. We thus have algebra homomorphisms $$\mfB_{q,l} \longrightarrow \End_{\mfU_q(\mfp_n)}(\C_q(n|n)^{\ot l}) \text{ and }  \mfU_q(\mfp_n) \longrightarrow \End_{\mfB_{q,l}}(\C_q(n|n)^{\ot l}).$$ The main theorem of this section states that $\mfB_{q,l}$ is the full centralizer of the action of $\mfU_q(\mfp_n)$ on $\C_q(n|n)^{\ot l}$ when $n\ge l$.  

\begin{theorem}\label{thm:Bqcent}
The map $\mfB_{q,l} \longrightarrow \End_{\mfU_q(\mfp_n)}(\C_q(n|n)^{\ot l})$ is surjective and it is injective when $n\ge l$. 
\end{theorem}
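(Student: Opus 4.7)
The plan is a specialization argument over the local ring $\mcA=\C[q,q^{-1}]_{(q-1)}$ (as in Section~\ref{limquant}) that transports the statement across $q=1$ to Moon's classical result identifying the periplectic Brauer algebra $A_l$ with $\End_{\mfU\mfp_n}(\C(n|n)^{\ot l})$.  Let $\mfB_{\mcA,l}\subset\mfB_{q,l}$ be the $\mcA$-subalgebra generated by the $\mst_i$ and $\msc_i$, set $V_\mcA=\C_\mcA(n|n)^{\ot l}$ and $V_q=\C_q(n|n)^{\ot l}$, and write $E_\mcA=\End_{\mfU_\mcA\mfp_n}(V_\mcA)$, $E_q=\End_{\mfU_q\mfp_n}(V_q)$, and $E_1=\End_{\mfU\mfp_n}(\C(n|n)^{\ot l})$.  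Since $V_\mcA$ is free of finite rank over the discrete valuation ring $\mcA$, the submodule $E_\mcA\subset\End_\mcA(V_\mcA)$ is finitely generated, torsion-free, hence free, and $\mathrm{rank}_\mcA E_\mcA=\dim_{\C(q)}E_q$.  The reduction map $E_\mcA/(q-1)E_\mcA\to E_1$ is injective: if $f\in E_\mcA$ sends $V_\mcA$ into $(q-1)V_\mcA$, write $f=(q-1)g$ with $g\in\End_\mcA(V_\mcA)$; the identity $(q-1)(ug-gu)=uf-fu=0$ on the torsion-free $V_\mcA$ then forces $g\in E_\mcA$.

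For surjectivity, the remark after the definition of $\mfB_{q,l}$ gives a surjection $\mfB_{\mcA,l}/(q-1)\mfB_{\mcA,l}\twoheadrightarrow A_l$ matching generators, and composing with Moon's surjection $A_l\twoheadrightarrow E_1$ shows that the image $I_\mcA$ of $\mfB_{\mcA,l}\to E_\mcA$ hits all of $E_1$ through the injection of the previous paragraph.  In particular $E_\mcA/(q-1)E_\mcA\cong E_1$ and $I_\mcA+(q-1)E_\mcA=E_\mcA$.  Nakayama's lemma, applied to the finitely generated $\mcA$-module $E_\mcA/I_\mcA$, then gives $I_\mcA=E_\mcA$; extending scalars to $\C(q)$ produces the surjectivity of $\mfB_{q,l}\to E_q$.

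For injectivity when $n\ge l$, the isomorphism $E_\mcA/(q-1)E_\mcA\cong E_1$ combined with Moon's identification $A_l\iso E_1$ in this range gives $\dim_{\C(q)}E_q=\dim_\C A_l$.  It therefore suffices to establish the upper bound
\[
\dim_{\C(q)}\mfB_{q,l}\le\dim_\C A_l,
\]
since together with the surjection from the previous paragraph this forces $\mfB_{q,l}\iso E_q$.  I would prove the bound by exhibiting an $\mcA$-spanning set of $\mfB_{\mcA,l}$ indexed by the periplectic Brauer diagrams on $l$ strands: the braid-type relations in \eqref{qB2}--\eqref{qB3} standardize the permutation portion of an arbitrary word, and the cup-cap relations together with the mixed relations \eqref{qB4} allow one to collapse nested or adjacent $\msc$'s into the normal form.

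The hard part will be this diagrammatic spanning claim.  The classical $q=1$ statement is well understood via Brauer diagrams, but the extra error terms $(q-q^{-1})\msc_{i+1}\msc_i$ appearing in \eqref{qB4} have to be carefully tracked when pushing crossings past caps to reduce a word; once the bound is in place, both halves of the theorem fall out at once from the specialization formalism above.
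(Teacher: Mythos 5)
Your overall strategy is the one the paper itself uses: specialization over $\mcA$ at $q=1$, following the method of Theorem 3.28 of \cite{BGJKW} together with Proposition \ref{prop:q1Brauer} and Moon's Theorem 4.5, which is exactly the lattice/Nakayama formalism you set up by hand (your injectivity of $E_\mcA/(q-1)E_\mcA\to E_1$ is the role played by Lemma 3.27 of \cite{BGJKW}). The surjectivity half of your argument is essentially complete and correct. One small slip is the direction of the comparison map: from the presentations one gets a surjection $A_l \onto \mfB_{q,l}(\mcA)/(q-1)\mfB_{q,l}(\mcA)$, not the other way around (Proposition \ref{prop:q1Brauer} says it is an isomorphism); but this is harmless for surjectivity, since all you actually need is that the reductions at $q=1$ of the operators $P_iS_{i,i+1}$ and $\mfc_i$ generate Moon's centralizer, together with the observation (implicit in the proof of Theorem \ref{defprop}) that the reduced $\mfU_\mcA\mfp_n$-action is the natural $\mfp_n$-action twisted by an automorphism, hence has the same centralizer.

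The injectivity half, however, has a genuine gap: you reduce it to the bound $\dim_{\C(q)}\mfB_{q,l}\le\dim_\C A_l$ and then only sketch that bound, acknowledging it as ``the hard part.'' This bound is not a formal consequence of the specialization formalism. In particular, Proposition \ref{prop:q1Brauer} alone does not give it: knowing $\mfB_{q,l}(\mcA)/(q-1)\mfB_{q,l}(\mcA)\cong A_l$ yields the generic-fibre bound only if one also knows that $\mfB_{q,l}(\mcA)$ is finitely generated as an $\mcA$-module, which again requires a spanning set; an $\mcA$-algebra can perfectly well have finite-dimensional special fibre and infinite-dimensional generic fibre (e.g.\ $\mcA\oplus\C(q)$ with square-zero ideal), so nothing in the presentation rules this out a priori. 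Thus the straightening argument --- rewriting an arbitrary word in the $\mst_i,\msc_i$ as an $\mcA$-linear combination of normal-form words indexed by periplectic Brauer diagrams, carefully tracking the error terms $(q-q^{-1})\msc_{i+1}\msc_i$ in \eqref{qB4} --- is precisely the technical core of the injectivity statement; it is the analogue of the spanning computations for the quantum walled Brauer--Clifford algebra in \cite{BGJKW} on which the paper's citation-based proof rests. Until that spanning lemma is actually carried out, your proposal establishes surjectivity but not the isomorphism for $n\ge l$.
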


\begin{proof} This is a $q$-analogue of Theorem 4.5 in \cite{Mo}. The proof follows the lines of the proof of Theorem 3.28 in \cite{BGJKW}.

Recall that  $\mcA = \C[q,q^{-1}]_{(q-1)}$ is the localization of $\C[q,q^{-1}]$ at the ideal generated by $q-1$. The algebra $\mfU_{\mcA}\mfp_n$ was defined at the beginning of Section \ref{limquant} and it acts on $\C_{\mcA}(n|n)^{\otimes l}$. Let's abbreviate it $\wt{\mfU}$ for the moment. Let $\End_{\wt{\mfU}}(\C_{\mcA}(n|n)^{\otimes l})$ be the $\mcA$-subalgebra of $\End_{\mcA}(\C_{\mcA}(n|n)^{\otimes l})$ that consists of all the $\mcA$-endomorphisms of $\C_{\mcA}(n|n)^{\otimes l}$ that commute with the action of $\wt{\mfU}$. 

Let $\mfB_{q,l}(\mcA)$ be the $\mcA$-associative subalgebra of $\mfB_{q,l}$ generated by $\mst_i$ and $\msc_i$ for all $i=1,\ldots,l-1$. Theorem 5.5 will follow from the statement that the $\mcA$-homomorphism $$\psi: \mfB_{q,l}(\mcA) \longrightarrow \End_{\wt{\mfU}}(\C_{\mcA}(n|n)^{\otimes l}) $$ is surjective and is an isomorphism whenever $n\ge l$.

Let $A_{l}$ be the algebra given in Definition 2.2 in [Mo]. Proposition \ref{prop:q1Brauer} below gives use an isomorphism $\rho: A_{l} \longrightarrow (\mcA/(q-1)\mcA) \otimes_{\mcA} \mfB_{q,l}(\mcA)$ which fits within the following diagram (see the proof of Theorem 3.28 in \cite{BGJKW}). 

\[\xymatrix@C=20mm{
    A_{l} \ar[r]^-{\rho}\ar@{->>}[dr]& (\mcA/(q-1)\mcA) \otimes_{\mcA} \mfB_{q,l}(\mcA) \ar[r]^-{\id \ot \psi}&
        (\mcA/(q-1)\mcA)  \otimes_{\mcA} \End_{\wt{U}}(\C_{\mcA}(n|n)^{\otimes l})\ar[dl]\ar@{^{(}->}[d]\\
    &\End_{\mfp_n}(\C(n|n)^{\otimes l})\ar@{^{(}->}[r]&\End_{\mathbb{C}}(\C(n|n)^{\otimes l})
}\]

The rest of the proof can proceed as in  \cite{BGJKW}), using Theorem 4.5 in \cite{Mo} along with Lemma 3.27 in \cite{BGJKW}, which can be applied in the present situation. \end{proof}

\begin{proposition}\label{prop:q1Brauer}
The quotient algebra $\mfB_{q,l}(\mcA)/(q-1)\mfB_{q,l}(\mcA)$ is isomorphic to the algebra $A_l$ given in Definition 2.2 in \cite{Mo}.
\end{proposition}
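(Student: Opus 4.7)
The plan is to exhibit mutually inverse $\C$-algebra homomorphisms between $\mfB_{q,l}(\mcA)/(q-1)\mfB_{q,l}(\mcA)$ and $A_l$ by defining them on generators. Both algebras are presented with the same labelled generating set ($\mst_i, \msc_i$ on the quantum side, $s_i, c_i$ on the classical side, for $1 \leq i \leq l-1$), so the obvious candidates are $\bar{\mst}_i \mapsto s_i$, $\bar{\msc}_i \mapsto c_i$ and its putative inverse $s_i \mapsto \bar{\mst}_i$, $c_i \mapsto \bar{\msc}_i$. If one shows both are well-defined, they are clearly inverse to each other on generators, hence mutually inverse isomorphisms.

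The key arithmetic observations are that, in $\mcA/(q-1)\mcA \cong \C$, we have $q \equiv 1$ and $q^{-1} \equiv 1$, while $q - q^{-1} = q^{-1}(q-1)(q+1) \in (q-1)\mcA$. Consequently, any relation of the form ``$\text{something} + (q-q^{-1}) \cdot X = 0$'' in $\mfB_{q,l}(\mcA)$ becomes simply ``$\text{something} = 0$'' in the quotient, and quadratic relations such as $(\mst_i - q)(\mst_i + q^{-1}) = 0$ become $\bar{\mst}_i^2 = 1$. Applying this systematically to the defining relations \eqref{qB1}--\eqref{qB4} produces exactly the presentation of $A_l$ as given in Definition 2.2 of \cite{Mo}; this is precisely what the remark immediately following the definition of $\mfB_{q,l}$ asserts. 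Thus the map $\bar\phi:\mfB_{q,l}(\mcA)/(q-1)\mfB_{q,l}(\mcA) \to A_l$ with $\bar{\mst}_i \mapsto s_i$, $\bar{\msc}_i \mapsto c_i$ is well-defined, and conversely the map $\psi: A_l \to \mfB_{q,l}(\mcA)/(q-1)\mfB_{q,l}(\mcA)$ with $s_i \mapsto \bar{\mst}_i$, $c_i \mapsto \bar{\msc}_i$ is well-defined because every defining relation of $A_l$ appears as the mod-$(q-1)$ reduction of a relation in $\mfB_{q,l}(\mcA)$.

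The only genuine work is the relation-by-relation bookkeeping: one must verify that the $q=1$ specialization of each relation in \eqref{qB1}--\eqref{qB4} matches the corresponding relation in Moon's $A_l$ on the nose, signs included. The main place where care is required is \eqref{qB3} and \eqref{qB4}, where signs and the vanishing $(q-q^{-1})$ terms interact; the algebraic mechanism is otherwise a standard application of the universal property of algebras presented by generators and relations.
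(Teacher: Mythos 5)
Your reduction mechanism is fine as far as it goes: since $\mfB_{q,l}(\mcA)$ is defined by generators and relations over $\mcA$, the quotient $\mfB_{q,l}(\mcA)/(q-1)\mfB_{q,l}(\mcA)$ is indeed the $\C$-algebra presented by the same generators subject to the relations \eqref{qB1}--\eqref{qB4} specialized at $q=1$, and your arithmetic ($q\equiv q^{-1}\equiv 1$, $q-q^{-1}\in(q-1)\mcA$) is correct. One of your two maps, $A_l\to \mfB_{q,l}(\mcA)/(q-1)\mfB_{q,l}(\mcA)$, is exactly the surjection the paper constructs and calls immediate. The gap is the other half: the well-definedness of your map $\bar\phi$ (equivalently, injectivity of that surjection) rests entirely on the claim that the $q=1$ specialization of \eqref{qB1}--\eqref{qB4} reproduces Moon's Definition 2.2 \emph{on the nose}, i.e.\ that the two relation lists generate the same ideal. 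You never carry out this verification (you call it ``bookkeeping''), and the only justification you offer is the Remark following the definition of $\mfB_{q,l}$ --- but that Remark is an unproved aside whose rigorous content is precisely this Proposition, so the appeal is circular. If the presentations really did coincide verbatim the Proposition would be a triviality; the paper's own proof indicates the authors do not regard it as such.

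Indeed, the paper only takes the surjection $A_l \onto \mfB_{q,l}(\mcA)/(q-1)\mfB_{q,l}(\mcA)$ for free and proves injectivity by a representation-theoretic argument in the style of Proposition 3.21 of \cite{BGJKW}, using Theorem 4.1 of \cite{Mo}: the quotient acts on $\C(n|n)^{\ot l}$ via the $q=1$ reduction of the $\mcA$-form of the action (where $P_iS_{i,i+1}$ reduces to $P_i$ and $\mfc_i$ is unchanged), the composite $A_l\to \mfB_{q,l}(\mcA)/(q-1)\mfB_{q,l}(\mcA)\to\End_{\C}(\C(n|n)^{\ot l})$ is Moon's classical action, and faithfulness of that action for $n\ge l$ forces injectivity. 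So the step you dismiss as routine is exactly where the nontrivial input (Moon's Theorem 4.1, or an explicit derivation of each specialized relation from Moon's relations and conversely, signs included) must enter. To repair your argument, either actually perform that two-way comparison of ideals, or replace it by the faithfulness/dimension argument as the paper does.
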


\begin{proof} It follows immediately from the definitions of both $A_l$ and $\mfB_{q,l}(\mcA)$ that we have a surjective algebra homomorphism $A_l \onto \mfB_{q,l}(\mcA)/(q-1)\mfB_{q,l}(\mcA)$. That it is injective can be proved as in the proof of Proposition 3.21 in \cite{BGJKW} using Theorem 4.1 in \cite{Mo}. \end{proof}

The $q$-Schur superalgebras of type $Q$ were introduced in \cite{BGJKW} and \cite{DuWa1,DuWa2}. Considering \textit{loc. cit.} and the earlier work on $q$-Schur algebras for $\mfgl_n$ (see for instance \cite{Do}), the following definition is natural. 

\begin{definition}
The $q$-Schur superalgebra $S_q(\mfp_n,l)$ of type $P$ is the centralizer of the action of $\mfB_{q,l}$ on $\C_q(n|n)^{\ot l}$, that is, $S_q(\mfp_n,l) = \End_{\mfB_{q,l}}(\C_q(n|n)^{\ot l})$.
\end{definition}

We have an algebra homomorphism $\mfU_q(\mfp_n) \longrightarrow S_q(\mfp_n,l)$: it is an open question whether or not this map is surjective. We also have an algebra homomorphism $\mfB_{q,l} \lra \End_{S_q(\mfp_n,l)}(\C_q(n|n)^{\ot l})$ and it is natural to expect that it should be an isomorphism, perhaps under certain conditions on $n$ and $l$.

\noindent
Department of Mathematics, University of Texas at Arlington 
\\ Arlington, TX 76021, USA\\
saber.ahmed\@@mavs.uta.edu

\noindent
Department of Mathematics, University of Texas at Arlington 
\\ Arlington, TX 76021, USA\\
grandim\@@uta.edu

\noindent
University of Alberta, Department of Mathematical and Statistical Sciences, CAB 632\\
Edmonton, AB T6G 2G1, Canada\\
nguay\@@ualberta.ca


\begin{thebibliography}{100}



\bibitem[AGG]{AGG} S. Ahmed, D. Grantcharov, N. Guay, \emph{Quantized enveloping superalgebra of type $P$}, .

\bibitem[BDEA$^+1$]{BDEA^+1} M. Balagovic, Z. Daugherty, I. Entova-Aizenbud, I. Halacheva, J. Hennig, M. S. Im, G. Letzter, E. Norton, V. Serganova, C. Stroppel, \emph{The affine $VW$ supercategory},  Selecta Math. (N.S.) \textbf{26} (2020), no. 2, Paper No. 20, 42 pp., arXiv:1801.04178.

\bibitem[BDEA$^+2$]{BDEA^+2} M. Balagovic, Z. Daugherty, I. Entova-Aizenbud, I. Halacheva, J. Hennig, M. S. Im, G. Letzter, E. Norton, V. Serganova, C. Stroppel,  \emph{Translation functors and decomposition numbers for the periplectic Lie superalgebra $\mathfrak{p}(n)$},  Math. Res. Lett. \textbf{26} (2019), no. 3, 643--710, arXiv:1610.08470.

\bibitem[BGJKW]{BGJKW} G. Benkart, N. Guay, J. H. Jung, S.-J. Kang, S. Wilcox, \emph{Quantum walled Brauer-Clifford superalgebras}, J. Algebra \textbf{454} (2016), 433--474.

\bibitem[CE1]{CE1}  K. Coulembier, M. Ehrig, \emph{The periplectic Brauer algebra II: Decomposition multiplicities},   J. Comb. Algebra \textbf{2} (2018), no. 1, 19--46.

\bibitem[CE2]{CE2} K. Coulembier, M. Ehrig, \emph{The periplectic Brauer algebra III: The Deligne category},  Algebr Represent Theory (2020). https://doi.org/10.1007/s10468-020-09976-8.
 
\bibitem[ChGu]{ChGu} H. Chen, N. Guay, \emph{Twisted affine Lie superalgebra of type $Q$ and quantization of its enveloping superalgebra}, Math. Z., \textbf{272} (2012), no.1, 317--347.

\bibitem[Co]{Co}  K. Coulembier, \emph{The periplectic Brauer algebra},  Proc. Lond. Math. Soc. (3) \textbf{117} (2018), no. 3, 441--482.

\bibitem[CP]{CP} C. Chen, Y. Peng, \emph{Affine periplectic Brauer algebras}, J. Algebra \textbf{501} (2018), 345--372.

\bibitem[DHIN]{DHIN}  Z. Daugherty, I. Halacheva, M.S. Im, and E. Norton, \emph{On calibrated representations of the degenerate affine periplectic Brauer algebra}, arXiv:1905.05148.

\bibitem[Do]{Do} S. Donkin, \emph{The $q$-Schur Algebra},
London Mathematical Society Lecture Note Series, \textbf{253}, Cambridge University Press, Cambridge, 1998.

\bibitem[DuWa1]{DuWa1} J. Du, J. Wan, \emph{Presenting queer Schur superalgebras}, Int. Math. Res. Not. IMRN 2015, no. 8, 2210--2272.

\bibitem[DuWa2]{DuWa2} J. Du, J. Wan, \emph{The queer $q$-Schur superalgebra}, J. Aust. Math. Soc. (2018), \textbf{105}, no.3, 316--346.

\bibitem[EAS1]{EAS1}   Inna Entova-Aizenbud, Vera Serganova, \emph{Deligne categories and the periplectic Lie superalgebra},  arXiv:1807.09478.

\bibitem[EAS2]{EAS2}  Inna Entova-Aizenbud, Vera Serganova, \emph{Kac-Wakimoto conjecture for the periplectic Lie superalgebra}, arXiv:1905.04712.

\bibitem[FRT]{FRT} L. Faddeev, N. Reshetikhin, L. Takhtajan, \emph{Quantization of Lie groups and Lie algebras} (Russian),  Algebra i Analiz  \textbf{1}  (1989),  no. 1, 178--206;  translation in  Leningrad Math. J.  \textbf{1}  (1990),  no. 1, 193--225.

\bibitem[FSS]{FSS} L. Frappa, P. Sorba, A. Sciarrino, \emph{Deformation of the strange superalgebra $\tilde{P}(n)$}, J. Phys. A: Math. Gen. \textbf{26} (1993) 661--665.

\bibitem[GJKK]{GJKK} D. Grantcharov, J.H. Jung., S.-J. Kang, M. Kim, \emph{Highest weight modules over quantun queer superalgebra $U_q(\mfq(n))$},  Comm. Math. Phys. \textbf{296}  (2010),  no. 3, 827--860.

\bibitem[GJKKK]{GJKKK} D. Grantcharov, J.H. Jung., S.-J. Kang, M. Kashiwara, M. Kim, \emph{Quantum Queer Superalgebra and Crystal Bases},  Proc. Japan Acad. Ser. A Math. Sci. \textbf{86} (2010), no. 10, 177--182.

\bibitem[HIR]{HIR} C. Hoyt, M.S. Im, S. Reif, Denominator identities for the periplectic Lie superalgebra, J. Algebra, \textbf{567} (2021), 459--474.


\bibitem[IN]{IN} M.S. Im, E. Norton, Irreducible calibrated representations of periplectic Brauer algebras and hook representations of the symmetric group, arXiv:1906.07472.

\bibitem[IRS]{IRS} M. S. Im, S. Reif, V. Serganova, Grothendieck rings of periplectic Lie superalgebras, to appear in Math. Res. Lett.


\bibitem[K]{Kac}  V. Kac, \emph{Lie superalgebras}, Adv. Math. \textbf{26} (1977), no. 1, 8--96.
 
\bibitem[KT]{KT}  J. Kujawa, B. Tharp, \emph{The marked Brauer category}, J. Lond. Math. Soc. \textbf{95} (2017), no. 2, 393--413.

\bibitem[LeSh]{LeSh} D. Leites, A. Shapovalov, \emph{Manin-Olshansky triples for Lie superalgebras}, J. Nonlinear Math. Phys. \textbf{7} (2000), no. 2, 120--125. 

\bibitem[M]{M} A. Molev, \emph{A new quantum analog of the Brauer algebra}, Quantum groups and integrable systems, Czechoslovak J. Phys. \textbf{53} (2003), no. 11, 1073--1078.

\bibitem[Mo]{Mo} D. Moon, \emph{Tensor product representations of the Lie superalgebra $\mfp(n)$and their centralizers},  Comm. Algebra \textbf{31} (2003), no. 5, 2095--2140. 

\bibitem[Na1]{Na1} M. Nazarov, \emph{Yangians of the "strange'' Lie superalgebras}, Quantum groups (Leningrad, 1990), Lecture Notes in Math. \textbf{1510},  pp. 90-–97, Springer, Berlin, 1992. 

\bibitem[Na2]{Na2} M. Nazarov, \emph{Yangian of the queer Lie superalgebra}, Comm. Math. Phys. \textbf{208} (1999), no. 1, 195--223.

\bibitem[Ol]{Ol} G. Olshanski, \emph{Quantized universal enveloping superalgebra of type $Q$ and a super-extension of the Hecke algebra},  Lett. Math. Phys.  \textbf{24}  (1992),  no. 2, 93--102.

\bibitem[Ser]{Ser}  V. Serganova, \emph{On representations of the Lie superalgebra $\mathfrak{p}(n)$}, J. Algebra \textbf{258} (2002), 615--630.

\bibitem[We]{We} H. Wenzl, \emph{A $q$-Brauer algebra}, J. Algebra \textbf{358} (2012), 102--127.
\end{thebibliography}
\end{document}